\numberwithin{equation}{section}
\newtheorem{theorem}{Theorem}[section]
\newtheorem{lemma}[theorem]{Lemma}
\newtheorem{proposition}[theorem]{Proposition}
\newtheorem{question}[theorem]{Question}
\theoremstyle{definition}
\newtheorem{definition}[theorem]{Definition}
\newtheorem{remark}[theorem]{Remark}
\newcommand{\N}{\mathbb{N}} 
\newcommand{\Z}{\mathbb{Z}} 
\newcommand{\R}{\mathbb{R}} 
\newcommand{\C}{\mathbb{C}} 
\newcommand{\PS}{\mathcal{PS}}
\newcommand{\Synd}{\mathcal{S}}
\newcommand{\IP}{\mathcal{IP}}
\newcommand{\E}{\mathcal{D}}
\newcommand{\I}{\mathcal{I}}
\newcommand{\F}{\mathscr{F}}
\newcommand{\Lin}{\mathcal{L}}
\newcommand{\abs}[1]{\left\lvert#1\right\rvert}
\newcommand{\norm}[1]{\lVert#1\rVert}
\begin{document}

\title{A mixing operator $T$ for which $(T, T^2)$ is not disjoint transitive}

\author{Yunied Puig\footnote{Universit\`{a} degli Studi di Milano, Dipartimento di Matematica "Federigo Enriques", Via Saldini 50 - 20133 Milano, Italy. e-mail:puigdedios@gmail.com}}
\date{}
\maketitle

\begin{abstract}
Using a result from ergodic Ramsey theory, we answer a question posed by B\`{e}s, Martin, Peris and Shkarin by showing a mixing operator $T$ on a Hilbert space such that the tuple $(T, T^2)$ is not disjoint transitive. 
  \end{abstract}
 
 KEYWORDS: \emph{mixing operators, disjoint transitive operators}\\
 
 MSC (2010): Primary: 47A16; Secondary: 05D10
 
\section{Introduction}
 Let $X$ be a separable topological vector space. Denote the set of bounded linear operators on $X$ by $\Lin(X)$. From now on $T$ is considered in $\Lin(X)$. An operator $T$ is called $\emph{hypercyclic}$ provided that there exists a vector $x\in X$ such that its orbit $\{T^nx: n\geq 0\}$ is dense in $X$ and $x$ is called a \emph{hypercyclic vector} for $T$. Hypercyclic operators are one of the most studied objects in linear dynamics, see \cite{GrPe} and \cite{BaMa} for further information concerning concepts, results and a detailed account on this subject. More generally, a tuple of operators $(T_1, \dots, T_N)$ is said to be \emph{disjoint hypercyclic} (\emph{d-hypercyclic} for short) if
 \[
 \{(T_1^nx, \dots, T_N^nx ): n\in \N\}
 \]
is dense in $X^N$ for some vector $x\in X$.   

   If $X$ is an $F$-space, thanks to Birkhoff's theorem \cite{BaMa}, $T$ is hypercyclic if and only if $T$ is \emph{topologically transitive}, i.e. for every non-empty open sets $U, V$ of $X$, the return set $N(U, V):=\{n\geq 0: T^n(U)\cap V\neq \emptyset\}$ is non-empty. If $N(U, V)$ is cofinite for every non-empty open sets $U$ and $V$ then $T$ is said to be \emph{mixing}.
   
  The notion of disjoint transitivity, a strengthening of transitivity is defined as follows: a tuple of operators $(T_1, \dots, T_N)$ is \emph{disjoint transitive} (\emph{d-transitive} for short), if for any $(N+1)$-tuple $(U_i)_{i=0}^N$ of non-empty open sets 
  \[
  N_{U_1,\dots, U_N; U_0}:=\big\{k\geq 0: T^{-k}_{1}(U_1)\cap \dots \cap T^{-k}_{N}(U_N) \cap U_0\neq\emptyset\big\}
  \]
  is non-empty. In particular, if the set $ N_{U_1,\dots, U_N; U_0}$ happens to be cofinite for any $(N+1)$-tuple $(U_i)_{i=0}^N$ of non-empty open sets, then $(T_1, \dots, T_N)$ is said to be \emph{disjoint mixing} (\emph{d-mixing} for short). Connection between $d$-hypercyclicity and $d$-transitivity can be found in \cite{BePe}.
  
  B\`{e}s, Martin, Peris and Shkarin \cite{BMPS} showed the following: if $T$ is an operator on $X$ satisfying the Original Kitai Criterion, then the tuple $(T, \dots, T^r)$ is $d$-mixing, for any $r\in \N$. As a consequence, any bilateral weighted shift $T$ on $l^p(\Z), (1\leq p<\infty)$ or $c_0(\Z)$ is mixing if and only if $(T, \dots, T^r)$ is $d$-mixing, for any $r\in \N$. Nevertheless, they remarked that this phenomenon does not occur beyond the weighted shift context, by providing an example of a mixing Hilbert space operator $T$ so that $(T, T^2)$ is not $d$-mixing. This result is a partial answer to the following question posed by the authors in the same paper \cite{BMPS}.
   \begin{question}
   \label{questionBMPS}
   Does there exist a mixing continuous linear operator $T$ on a separable Banach space, such that $(T, T^2)$ is not d-transitive?
   \end{question}
   Our aim is to give a positive answer to Question \ref{questionBMPS} (Theorem \ref{TrhcTuplenondTransitive} below).
  
\subsection{Preliminaries and main results}
\label{prelAndMainres}

   Let $A\subseteq \N$, $\abs{A}$ stands for the cardinality of $A$. Let $\F$ be a set of subsets of $\N$, we say that $\F$ is a \emph{family} on $\N$ provided (I.) $ \abs{A}=\infty$ for any $A\in \F$ and (II.) $ A\subset B$ implies $B\in \F$, for any $A\in \F$. From now on $\F$ will be a family on $\N$.

 In a natural way we generalize the notion of disjoint transitivity by introducing what we call \emph{$\F$-disjoint transitivity} (or \emph{$d$-$\F$} for short). 
  \begin{definition}
The tuple of sequence of operators $(T_{1,n_k}, \dots, T_{N, n_k})_k$ is said to be \emph{$d$-$\F$} if for any $(N+1)$-tuple $(U_i)_{i=0}^N$ of non-empty open sets  we have
\[
\big\{k\geq 0: T^{-1}_{1, n_k}(U_1)\cap \dots \cap T^{-1}_{N, n_k}(U_N) \cap U_0\neq\emptyset\big\}\in \F.
\]
In particular, if $T_{i, n_k}=T_i^k$, for any $k\in \N, 1\leq i \leq N$ in the above definition, then the $N$-tuple of operators $(T_1, \dots, T_N)$ is said to be \emph{$d$-$\F$}.  
  \end{definition} 
  
  Observe that in particular whenever $\F$ is the family of non-empty sets or the family of cofinite sets, we obtain the notion of disjoint transitivity and disjoint mixing respectively. On the other hand, if $N=1$ we obtain the $\F$-transitivity notion. More specifically, an operator $T$ is called \emph{$\F$-transitive operator} (or \emph{$\F$-operator} for short) whenever the set $N(U, V):=\{n\geq0: T^n(U)\cap V\neq \emptyset\}$ is in $\F$. This notion was introduced and studied in \cite{BMPP}. 
  
     Recall that an operator $T$ is said to be \emph{chaotic} if it is hypercyclic and has a dense set of periodic points ($x\in X$ is a \emph{periodic point} of $T$ if $T^kx=x$ for some $k\geq 1$). 
     
   An operator $T$ is said to be \emph{reiteratively hypercyclic} if there exists $x\in X$ such that for any non-empty open set $U$ in $X$, the set $N(x, U)=\{n\geq 0: T^nx\in U\}$ has positive upper Banach density, where the upper Banach density of a set $A\subset \N$ is given by \[\overline{Bd}(A)=\lim_{s\to\infty}\frac{\alpha^s}{s},\] and $\alpha^s=\limsup_{k\to \infty} |A\cap [k+1, k+s]|$, for any $s\geq 1$.
Reiteratively hypercyclic operators have been studied in \cite{BMPP1} and \cite{Pu}.
 
  It is known that there exists a reiteratively hypercyclic operator which is not chaotic, see \cite{BaMa}. However, concerning the converse we have the following result due to Menet.
     
     \begin{theorem} Theorem 1.1 \cite{Me}
     \label{theor.Menet}
     Any chaotic operator is reiteratively hypercyclic.
     \end{theorem}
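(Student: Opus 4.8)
The plan is to reduce the statement to a transitivity property of the return sets and then exploit the dense supply of periodic points. I would invoke the transitivity-type characterization of reiterative hypercyclicity: an operator $T$ on a separable $F$-space is reiteratively hypercyclic if and only if for every pair of non-empty open sets $U,V$ the return set $N(U,V)=\{n\geq 0: T^n(U)\cap V\neq\emptyset\}$ has positive upper Banach density (see \cite{BMPP1}). Granting this, it suffices to prove that $\overline{Bd}(N(U,V))>0$ for arbitrary non-empty open $U,V$, and the hypothesis that $T$ is chaotic is used precisely to produce, inside each such $N(U,V)$, arbitrarily long arithmetic progressions with a fixed common difference.

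So fix non-empty open sets $U,V$. Since $T$ has a dense set of periodic points, I would first pick a periodic point $p\in V$, say of period $k$, so that $T^{jk}p=p\in V$ for every $j\geq 0$. Fixing $s\in\N$ and using the continuity of the finitely many operators $T^{jk}$ with $0\leq j<s$, I can then choose an open neighborhood $W$ of $p$ with $T^{jk}(W)\subseteq V$ for all $0\leq j<s$, namely the intersection of the open sets $(T^{jk})^{-1}(V)$. Because $T$ is hypercyclic, hence topologically transitive on an $F$-space, the return set $N(U,W)$ is non-empty, and in fact infinite (return sets of hypercyclic operators are always infinite), so I may select $n_0\in N(U,W)$ as large as desired together with a point $u\in U$ satisfying $T^{n_0}u\in W$.

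The conclusion is then a short computation. For such a choice, $T^{n_0+jk}u=T^{jk}(T^{n_0}u)\in T^{jk}(W)\subseteq V$ for every $0\leq j<s$, so the arithmetic progression $\{n_0+jk:0\leq j<s\}$ is contained in $N(U,V)$. This places at least $s$ elements of $N(U,V)$ inside a window of length $(s-1)k+1$, and, since $n_0$ can be taken arbitrarily large, such a configuration recurs at arbitrarily large positions. Hence, writing $\alpha^L$ as in the definition of upper Banach density, one gets $\alpha^{(s-1)k+1}\geq s$, so that $\overline{Bd}(N(U,V))\geq \frac{s}{(s-1)k+1}$; letting $s\to\infty$ yields $\overline{Bd}(N(U,V))\geq \frac1k>0$, which is exactly what the characterization requires.

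The genuinely delicate ingredient is the first step. Unlike plain hypercyclicity, reiterative hypercyclicity is not a $G_\delta$ condition on vectors, so the passage from ``every $N(U,V)$ has positive upper Banach density'' to the existence of a single reiteratively hypercyclic vector is not a direct Birkhoff-type Baire argument; this is where the main work of the characterization lies. By contrast, the dynamical heart of the proof — combining dense periodic points with topological transitivity to manufacture progressions of fixed step — is elementary once that reduction is available, and it is the density bound $\overline{Bd}(N(U,V))\geq 1/k$ above that makes the chaotic hypothesis pay off.
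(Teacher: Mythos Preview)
The paper does not prove this statement; it is quoted verbatim as Theorem~1.1 of \cite{Me} and used as a black box in deducing Theorem~\ref{TrhcTuplenondTransitive}. There is therefore no in-paper argument to compare your proposal against.

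On its own terms, your outline is sound. The dynamical step --- fixing a periodic point $p\in V$ of period $k$, shrinking to a neighborhood $W$ with $T^{jk}(W)\subseteq V$ for $0\le j<s$, and then using transitivity (via a hypercyclic vector in $U$, which makes $N(U,W)$ infinite) to hit $W$ from $U$ at arbitrarily large times --- does produce, inside $N(U,V)$, arithmetic progressions of length $s$ and step $k$ located arbitrarily far out, giving $\overline{Bd}(N(U,V))\ge 1/k>0$. You are also right to flag the reduction as the substantive ingredient: the implication from ``every $N(U,V)$ has positive upper Banach density'' to the existence of a single reiteratively hypercyclic vector is not a Baire argument, and it is exactly here that one must invoke the machinery of \cite{BMPP1}. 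Granting that input, your derivation is correct; just be aware that Menet's original proof in \cite{Me} proceeds differently and does not route through this characterization, so what you have written is a valid alternative deduction from the literature rather than a reconstruction of the cited proof.
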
 
   Recall that a set $A\subseteq \N$ is \emph{syndetic} set if $A$ has bounded gaps, i.e. if $A$ is enumerated increasingly as $(x_n)_n=A$, then $\max_n x_{n+1}-x_n<M$, for some $M>0$.
  
   In \cite{BMPS}, the authors show that there exists a mixing operator $T$ on a Hilbert space such that $(T, T^2)$ is not $d$-mixing. We show that the same operator satisfies more specific properties.
  
   \begin{theorem}
   \label{op.mixing.tuple.not.d-S}
   There exists $T\in \Lin(l^2)$ such that $T$ is mixing, chaotic  and  $(T, T^2)$ is not $d$-syndetic. 
   \end{theorem}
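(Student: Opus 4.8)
The plan is to take the very operator $T\in\Lin(\ell^2)$ that Bès, Martin, Peris and Shkarin built in \cite{BMPS} to show that $(T,T^2)$ is not $d$-mixing, and to squeeze two further properties out of it: chaos, and the stronger failure of $d$-syndeticity. Mixing is already guaranteed by \cite{BMPS}, so I would only have to (a) produce a dense set of periodic points and (b) sharpen ``the joint return set is not cofinite'' to ``the joint return set is not syndetic'' for a well-chosen triple of open sets.

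For (a) I would exploit the eigenstructure coming with the explicit description of $T$. Operators of this kind are assembled from blocks carrying rotations, and they carry a large supply of eigenvectors $x_\lambda$ with $Tx_\lambda=\lambda x_\lambda$. Any finite combination of such eigenvectors whose eigenvalues are roots of unity is a periodic point, so chaos reduces to checking that the eigenvectors with root-of-unity eigenvalues span a dense subspace of $\ell^2$; hypercyclicity is then free, since mixing forces topological transitivity. Once $T$ is chaotic, Theorem \ref{theor.Menet} also yields that $T$ is reiteratively hypercyclic, a fact I would keep in reserve to guarantee that the open sets chosen in (b) are charged with positive upper Banach density along an orbit, should the argument be run quantitatively.

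The core is (b), and here the result from ergodic Ramsey theory is indispensable. As the negation of $d$-syndeticity is existential, I would exhibit explicit non-empty open sets $U_0,U_1,U_2$ and show that
\[
R=\big\{k\ge0:\ T^{-k}(U_1)\cap T^{-2k}(U_2)\cap U_0\neq\emptyset\big\}
\]
has thick complement, i.e.\ that on arbitrarily long runs of consecutive integers $k$ the three conditions $x\in U_0$, $T^kx\in U_1$, $T^{2k}x\in U_2$ are jointly unsatisfiable. This joint unsatisfiability is the same incompatibility that \cite{BMPS} exploited against $d$-mixing, now read through the multiplicative pattern $\{k,2k\}$ that the pair $(T,T^2)$ imprints on the block geometry of $T$. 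The ergodic Ramsey input is what promotes this incompatibility from an infinite set of exceptional $k$ (enough for the failure of $d$-mixing) to a thick set of exceptional $k$ (enough for the failure of $d$-syndeticity), by arranging the $\{k,2k\}$-obstruction to persist over intervals of unbounded length while leaving each individual coordinate mixing.

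The main obstacle, I expect, is exactly this matching step. For \cite{BMPS} it suffices that the obstruction recur infinitely often, which needs no control on gap lengths; to defeat $d$-syndeticity one must instead produce gaps of unbounded length, and it is here — in arranging that the $\{k,2k\}$-obstruction guaranteed by the ergodic Ramsey theorem lands on ever longer runs of consecutive integers, rather than on a merely infinite set — that the ergodic Ramsey result, and not a direct computation, carries the argument. Extracting from the internal geometry of the BMPS operator a description of the admissible $k$ clean enough to be fed into that theorem is the delicate point.
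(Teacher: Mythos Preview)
Your plan for (a) is exactly what the paper does: the eigenvectors $\delta_t$ satisfy $Q_K\delta_t=e^{it}\delta_t$, and those with $t$ a dyadic multiple of $2\pi$ have root-of-unity eigenvalues and densely span $X_K$. So chaos comes for free.

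Part (b) is where you go wrong, and the error is a confusion about \emph{where} the ergodic Ramsey theory enters. In the paper, Theorem~\ref{op.mixing.tuple.not.d-S} is proved with \emph{no} Ramsey theory at all. The Bergelson--McCutcheon theorem is used only in Theorem~\ref{generalcase.rhc.implies.d-synd}, and there it runs in the \emph{opposite} direction to what you describe: it shows that for a reiteratively hypercyclic $T$ the ``diagonal'' return sets $\{k:T^{-k}U\cap\cdots\cap T^{-rk}U\cap U\neq\emptyset\}$ lie in $\mathcal D^*$ and hence are syndetic, which is then leveraged (together with $d$-transitivity, assumed) to force full $d$-syndeticity. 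A positive recurrence theorem of this type cannot be used to show that a return set is \emph{not} syndetic; it produces syndetic sets, not thick complements. So the sentence ``the ergodic Ramsey input is what promotes this incompatibility from an infinite set of exceptional $k$ \ldots\ to a thick set of exceptional $k$'' has the arrow pointing the wrong way and cannot be made to work.

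What the paper actually does to get thickness of the bad set is a direct sharpening of the BMPS hard-analysis estimate. In \cite{BMPS} one shows the sequence $(2Q_K^{k}-Q_K^{2k})$ is non-transitive along $k=2^{6^n}$ by constructing, via Sobolev-space interpolation (their Lemma~A.3), uniformly bounded $f_k\in W^{2,2}[-\pi,\pi]$ agreeing with $2e^{ikx}-e^{2ikx}$ near the dyadic grid. The paper reruns this construction for the thick index set $\{2^{6^n}-r:\,0\le r\le n,\ n\in\N\}$ (its Lemma~\ref{convenient-lema_A.3}); the point is that the perturbation $r\le n$ is so small compared with $2^{6^n}$ that every estimate in Lemma~A.3 survives unchanged. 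From this one reads off non-empty open $U,V$ with $\{n:(2Q_K^n-Q_K^{2n})(U)\cap V\neq\emptyset\}$ disjoint from that thick set, hence non-syndetic; shrinking $V$ to a ball $V_0$ with $2V_0-V_0\subset V$ converts this into the failure of $d$-syndeticity for $(Q_K,Q_K^2)$. The delicate step you flagged is therefore not ``feed the geometry into a Ramsey theorem'' but rather ``redo the concrete $W^{2,2}$ estimates on a slightly fatter, thick set of indices''.
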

 So, our result improves the result of \cite{BMPS} already mentioned but still does not answer Question \ref{questionBMPS}. In answering the question, we will see  Szemer\'{e}di's famous theorem unexpectedly playing an important role. Indeed, using a result of ergodic Ramsey theory due to Bergelson and McCutcheon \cite{BeMc}, which is in fact a kind of Szemer\'{e}di's theorem for generalized polynomials we obtain the following result.
 
   \begin{theorem}
   \label{generalcase.rhc.implies.d-synd}
If $T$ is reiteratively hypercyclic then $(T,\dots, T^r)$ is $d$-syndetic or not $d$-transitive, for any $r\in \N$.
   \end{theorem}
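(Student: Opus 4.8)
The statement is a dichotomy, so the plan is to assume that $(T,\dots,T^r)$ is $d$-transitive and to deduce that it is $d$-syndetic; reiterative hypercyclicity will enter only through the positive upper Banach density of the visiting-time sets of a fixed reiteratively hypercyclic vector. Fix such a vector $x$, so that $N(x,U)=\{n\ge 0: T^nx\in U\}$ has positive upper Banach density for every non-empty open set $U$. The heart of the argument is to reduce the ``off-diagonal'' return set attached to an arbitrary tuple $(U_0,\dots,U_r)$ to a single ``diagonal'' return set of one non-empty open set, and then to establish syndeticity of the latter by a Szemer\'edi-type multiple recurrence result.

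First I would prove a linking step. Given non-empty open sets $U_0,\dots,U_r$, $d$-transitivity furnishes some integer $m\ge 0$ and a point $y_0\in U_0$ with $T^{im}y_0\in U_i$ for $1\le i\le r$. Using continuity of the finitely many maps $T^{im}$, I would choose a non-empty open set $V$ with $y_0\in V\subseteq U_0$ and $T^{im}(V)\subseteq U_i$ for all $1\le i\le r$. The key observation is then purely formal: if $z\in V$ satisfies $T^{ij}z\in V$ for all $1\le i\le r$, then, writing $k=j+m$ and using $T^{ik}=T^{im}\circ T^{ij}$, one gets $z\in U_0$ and $T^{ik}z=T^{im}(T^{ij}z)\in T^{im}(V)\subseteq U_i$, so that $k$ lies in $N_{U_1,\dots,U_r;U_0}$. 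Consequently
\[
N_{U_1,\dots,U_r;U_0}\ \supseteq\ m+\big\{\, j\ge 1:\ \exists\, z\in V \text{ with } T^{ij}z\in V,\ 1\le i\le r \,\big\}.
\]
Since a translate of a syndetic set is syndetic and any superset of a syndetic set is syndetic, it suffices to prove that the diagonal return set $N^{\mathrm{diag}}_V:=\{\, j\ge 1:\exists\,z\in V,\ T^{ij}z\in V,\ 1\le i\le r\,\}$ is syndetic.

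For the diagonal set I would use the orbit of $x$. Put $A=N(x,V)$, which has positive upper Banach density. If $A$ contains an arithmetic progression $n,n+j,\dots,n+rj$ of length $r+1$ and common difference $j$, then $z=T^n x\in V$ witnesses $j\in N^{\mathrm{diag}}_V$, since $T^{ij}z=T^{n+ij}x\in V$. Thus $N^{\mathrm{diag}}_V$ contains the set of common differences of $(r+1)$-term arithmetic progressions contained in $A$. Via Furstenberg's correspondence principle this set is controlled by the multiple recurrence times of an associated measure preserving system, and here is where the result of Bergelson and McCutcheon enters: their Szemer\'edi-type theorem yields that, for a set of positive upper Banach density, the set of such common differences is syndetic (indeed an $\mathrm{IP}^\ast$ set). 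This gives syndeticity of $N^{\mathrm{diag}}_V$ and hence, by the previous paragraph, of $N_{U_1,\dots,U_r;U_0}$; as the tuple $(U_0,\dots,U_r)$ was arbitrary, $(T,\dots,T^r)$ is $d$-syndetic.

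The main obstacle is precisely the passage from the \emph{non-emptiness} supplied by $d$-transitivity to the \emph{syndeticity} demanded by $d$-syndetic transitivity: mere positivity of the density of $A$, or positivity of the average of the multiple recurrence expression, does not by itself bound the gaps of the set of return times, and it is the ergodic Ramsey input of Bergelson and McCutcheon that supplies the required syndeticity. A secondary point worth isolating is that $d$-transitivity is used only once, to manufacture the single configuration $(y_0,m)$ that drives the linking step; this is exactly what makes the statement a genuine dichotomy, since in the absence of $d$-transitivity no seed configuration need exist. I expect the only routine verifications to be the continuity argument producing $V$ and the bookkeeping that translation and passage to supersets preserve syndeticity.
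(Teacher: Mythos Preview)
Your proposal is correct and follows essentially the same route as the paper: use $d$-transitivity to produce a seed configuration, shrink to a single open set $V$, reduce the off-diagonal return set to a translate of the diagonal return set $N^{\mathrm{diag}}_V$, and then invoke the Bergelson--McCutcheon theorem applied to $N(x,V)$ to get syndeticity. One small difference worth noting: the paper splits $V$ as $O_1+O_2$ and intersects the two diagonal return sets using that $\mathcal{D}^*$ is a filter, then adds the two witnesses using linearity of $T$; your version works directly with a single $V$, which is slightly cleaner and, as you implicitly observe, does not actually require $T$ to be linear in the linking step.
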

   
 Now, by Theorem \ref{theor.Menet}, Theorem \ref{op.mixing.tuple.not.d-S} and Theorem \ref{generalcase.rhc.implies.d-synd} we can deduce our main result which gives a positive answer to Question \ref{questionBMPS}. 
    
\begin{theorem}
\label{TrhcTuplenondTransitive}
There exists a mixing and chaotic operator $T$ in $\Lin(l^2)$ such that $(T, T^2)$ is not d-transitive.
\end{theorem}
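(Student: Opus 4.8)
The plan is to produce the operator directly from Theorem \ref{op.mixing.tuple.not.d-S} and then close the argument by a short logical combination of the three preparatory results, with no new construction required at this stage. First I would invoke Theorem \ref{op.mixing.tuple.not.d-S} to fix an operator $T\in\Lin(l^2)$ that is simultaneously mixing, chaotic, and for which $(T,T^2)$ fails to be $d$-syndetic. This single operator already carries every structural feature demanded by the conclusion except the failure of $d$-transitivity, which is precisely what remains to be deduced.

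Next I would feed the chaoticity of $T$ into Menet's Theorem \ref{theor.Menet} to conclude that $T$ is reiteratively hypercyclic; this is the hinge connecting the concrete example to the Ramsey-theoretic dichotomy. With reiterative hypercyclicity available, I would apply Theorem \ref{generalcase.rhc.implies.d-synd} in the case $r=2$, which tells us that $(T,T^2)$ is either $d$-syndetic or not $d$-transitive. The proof then finishes by a disjunctive syllogism: since Theorem \ref{op.mixing.tuple.not.d-S} already guarantees that $(T,T^2)$ is \emph{not} $d$-syndetic, the first alternative of the dichotomy is excluded, and the second must hold, so $(T,T^2)$ is not $d$-transitive. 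Together with the mixing and chaotic properties recorded at the outset, this is exactly the asserted statement and yields a positive answer to Question \ref{questionBMPS}.

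I do not anticipate any genuine obstacle here, since all the difficulty has been isolated into the three cited results: the explicit construction in Theorem \ref{op.mixing.tuple.not.d-S}, Menet's passage from chaos to reiterative hypercyclicity in Theorem \ref{theor.Menet}, and the Szemer\'{e}di-type dichotomy derived from the Bergelson--McCutcheon theorem in Theorem \ref{generalcase.rhc.implies.d-synd}. The only point requiring care is bookkeeping: one must verify that the very operator $T$ furnished by Theorem \ref{op.mixing.tuple.not.d-S} is the one whose reiterative hypercyclicity and dichotomy are subsequently invoked, so that the non-$d$-syndetic property and the dichotomy refer to the identical tuple $(T,T^2)$.
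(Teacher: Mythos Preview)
Your proposal is correct and follows exactly the paper's own argument: invoke Theorem~\ref{op.mixing.tuple.not.d-S} to obtain a mixing, chaotic $T\in\Lin(l^2)$ with $(T,T^2)$ not $d$-syndetic, use Theorem~\ref{theor.Menet} to get reiterative hypercyclicity, and then apply the dichotomy of Theorem~\ref{generalcase.rhc.implies.d-synd} with $r=2$ to conclude that $(T,T^2)$ is not $d$-transitive. The paper states this deduction in one line and places all the work in the three cited results, just as you describe.
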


  
  \section{Proof of Theorem \ref{TrhcTuplenondTransitive}}
   As already mentioned, in order to prove Theorem \ref{TrhcTuplenondTransitive} it is enough to prove Theorem \ref{op.mixing.tuple.not.d-S} and Theorem \ref{generalcase.rhc.implies.d-synd}.
  \subsection{Proof of Theorem \ref{op.mixing.tuple.not.d-S}}
  
  In Theorem 3.8 \cite{BMPS} the authors show an example of a mixing Hilbert space operator $T$ such that $(T, T^2)$ is not $d$-mixing. We will show that in addition $T$ is chaotic and $(T, T^2)$ is not $d$-syndetic. So in particular it is not $d$-mixing.  We follow exactly the same sketch of proof of Theorem 3.8 \cite{BMPS} introducing minor modifications for our convenience. Nevertheless we describe here all the details for the sake of completeness.
 
  Let $1\leq p < \infty, -\infty< a<b<+\infty$ and $k\in \N$. Recall that the Sobolev space $W^{k, p}[a, b]$ is the space of functions $f\in C^{k-1}[a, b]$ such that $f^{(k-1)}$ is absolutely continuous and $f^{(k)}\in L^p[a, b]$. The space $W^{k, p}[a, b]$ endowed with the norm 
  \[
  \norm{f}_{W^{k, p}[a, b]}=\Big(\int_a^b\Big(\sum_{j=0}^k\abs{f^{(j)}(x)}^p\Big)dx\Big)^{1/p}
  \]
  is a Banach space isomorphic to $L^p[0,1]$. Now, $W^{k, 2}[a, b]$ is a separable infinite-dimensional Hilbert space for each $k\in \N$. The family of operators to be considered lives on separable complex Hilbert spaces and is built from a single operator. Let $M\in \Lin(W^{2, 2}[-\pi, \pi])$ be defined by the formula 
  \begin{equation}
  \label{def.opM}
  M: W^{2, 2}[-\pi, \pi]\longrightarrow W^{2, 2}[-\pi, \pi], \qquad Mf(x)=e^{ix}f(x).
  \end{equation}
  Denote $\mathscr{H}=W^{2, 2}[-\pi, \pi]$ and $M^*$ the dual operator. Then, $M^*\in \Lin(\mathscr{H}^*)$. For each $t\in [-\pi, \pi], \delta_t \in \mathscr{H}^*$, where $\delta_t: \mathscr{H}\longrightarrow \C, \delta_t(f)=f(t)$. Furthermore, the map $t\to \delta_t$ from $[-\pi, \pi]$ to $\mathscr{H}^*$ is norm-continuous. For a non-empty compact subset $K$ of $[-\pi, \pi]$, denote 
  \[
  X_K=\overline{span\{\delta_t: t\in K\}}
  \]
  where the closure of $span\{\delta_t: t\in K\}$ is taken with respect to the norm of $\mathscr{H}^*$.
  
  Now, the functionals $\delta_t$ are linearly independent, $X_K$ is always a separable Hilbert space and $X_K$ is infinite dimensional if and only if $K$ is infinite. The following condition holds 
  \[
  M^*\delta_t=e^{it}\delta_t, \qquad \mbox{for\quad each $t\in [-\pi, \pi]$}. 
  \]
   Hence, each $X_K$ is an invariant subspace for $M^*$, which allows us to consider the operator
   \[
   Q_K\in \Lin(X_K), \qquad Q_K=\left.M^*\right|_{X_K}.
   \]
    The following is taken from \cite{BMPS} and tells us when $Q_K$ is mixing or non-transitive, we omit the proof. 
    \begin{proposition} Proposition 3.9 \cite{BMPS}
    \label{Q_K.tran.or.non-trans}\
    
    Let $K$ be a non-empty compact subset of $[-\pi, \pi]$. If $K$ has no isolated points, then $Q_K$ is mixing. If $K$ has an isolated point, the $Q_K$ is non-transitive. 
    \end{proposition} 
    
    Now, consider the set
    \begin{equation}
    \label{def.set.K}
    K=\Big\{\sum_{n=1}^\infty 2\pi \epsilon_n\cdot \frac{1}{2^{6^n}}: \epsilon\in\{0, 1\}^\N\Big\},
    \end{equation}
    then the operator $Q_K\in\Lin (X_K)$ is the one pointed out in \cite{BMPS} to be mixing such that $(Q_K, Q_K^2)$ is not $d$-mixing.
     In addition, we show that $Q_K$ is chaotic and that $(Q_K, Q_K^2)$ is not $d$-syndetic. This is the content of the next results.
    
   \begin{lemma}
   \label{Q_K_chaotic}
   Let $K$ be the compact subset of $[-\pi, \pi]$ defined in (\ref{def.set.K}), then $Q_K$ is chaotic.
   \end{lemma}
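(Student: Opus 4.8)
The plan is to verify the two defining properties of chaos separately. Hypercyclicity comes essentially for free: by Proposition \ref{Q_K.tran.or.non-trans}, since the set $K$ in (\ref{def.set.K}) is a Cantor-type set with no isolated points, $Q_K$ is mixing and hence hypercyclic. It therefore remains to produce a dense set of periodic points for $Q_K$.

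The key structural fact is that each $\delta_t$ (for $t\in K$) is an eigenvector of $Q_K$: since $M^*\delta_t=e^{it}\delta_t$, we have $Q_K^m\delta_t=e^{imt}\delta_t$, so $\delta_t$ is a periodic point precisely when $e^{imt}=1$ for some $m\geq 1$, i.e.\ when $t$ is a rational multiple of $2\pi$. More generally, any finite linear combination of such eigenvectors $\delta_{t_1},\dots,\delta_{t_j}$ with every $t_i\in 2\pi\mathbb{Q}$ is periodic, with common period the least common multiple of the denominators of the $t_i/(2\pi)$. Thus $\spa\{\delta_t: t\in K,\ t\in 2\pi\mathbb{Q}\}$ consists entirely of periodic points of $Q_K$, and it suffices to show this span is dense in $X_K$.

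Since $X_K=\overline{\spa\{\delta_t: t\in K\}}$ and, as recorded above, the map $t\mapsto\delta_t$ is norm-continuous, density will follow once I show that $K\cap 2\pi\mathbb{Q}$ is dense in $K$. Here the sparse-expansion structure of $K$ does the work: for $t=2\pi\sum_{n\geq 1}\epsilon_n 2^{-6^n}\in K$, the truncations $t_N=2\pi\sum_{n=1}^N\epsilon_n 2^{-6^n}$ again lie in $K$ (they correspond to the sequence $\epsilon$ with all entries past index $N$ set to $0$), are rational multiples of $2\pi$ because they are finite sums of such, and satisfy $\abs{t-t_N}\leq 2\pi\sum_{n>N}2^{-6^n}\to 0$. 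Consequently $\delta_t=\lim_N\delta_{t_N}$ lies in the closure of $\spa\{\delta_s: s\in K\cap 2\pi\mathbb{Q}\}$, so this closure contains every generator $\delta_t$ of $X_K$ and hence equals $X_K$.

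The argument is elementary once the eigenvector picture is in place; the only point requiring a little care is the passage from density of the rational-angle parameters in $K$ to density of the corresponding periodic vectors in $X_K$, which is exactly what the norm-continuity of $t\mapsto\delta_t$ delivers, together with the observation that each truncation $t_N$ stays inside $K$. Combining this dense set of periodic points with the hypercyclicity inherited from mixing yields that $Q_K$ is chaotic.
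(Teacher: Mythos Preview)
Your proof is correct and follows essentially the same approach as the paper: both use that $Q_K$ is mixing by Proposition~\ref{Q_K.tran.or.non-trans}, observe that $\delta_t$ is a periodic eigenvector whenever $t\in 2\pi\mathbb{Q}$ (hence so are finite spans of such), and then exploit the norm-continuity of $t\mapsto\delta_t$ together with the fact that the finite truncations $t_N=2\pi\sum_{n=1}^N\epsilon_n 2^{-6^n}$ lie in $K\cap 2\pi\mathbb{Q}$ and are dense in $K$. The paper singles out the set $A$ of all such truncations explicitly, but the argument is identical.
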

    \begin{proof}
    The operator $Q_K$ is mixing by Proposition \ref{Q_K.tran.or.non-trans}, hence it remains to show that it has a dense set of periodic points.  Denote by $Per(Q_K)$ the set of periodic points of $Q_K$.
    
    We would like to recall that $ Q_K^n \delta_t=e^{int}\delta_t$, for any $n\in \Z_+$ and $t\in K$, the details can be found in the proof of Proposition 3.9 \cite{BMPS}.
    
    Consider the set $A=\Big\{\sum_{n=1}^k 2\pi \epsilon_n/2^{6^n}: \epsilon\in\{0, 1\}^{\{1, \dots, k\}}, k\in \N\Big\}$.
    
   Observe that $\sum_{n=1}^k 2\pi \epsilon_n/2^{6^n}=2\pi m/2^{6^k}$, for some $m$ and any $\epsilon\in\{0, 1\}^{\{1, \dots, k\}}$. So, clearly $\{\delta_t: t\in A\}\subseteq Per(Q_K)$. Moreover, if $r_1=2\pi m_1/2^{6^{n_1}}\in A, r_2=2\pi m_2/2^{6^{n_2}}\in A$, then $Q_K^{2^{6^{n_1}}2^{6^{n_2}}}(\alpha_1\delta_{r_1}+\alpha_2\delta_{r_2})=\alpha_1\delta_{r_1}+\alpha_2\delta_{r_2}$ for any $\alpha_1, \alpha_2 \in \C$, so $span\{\delta_t: t\in A\}\subseteq Per(Q_K)$.
    
    On the other hand, since $A$ is dense in $K$, we deduce that $\overline{\{\delta_t: t\in A\}}=\{\delta_t: t\in K\}$. Indeed, for any $r\in K$ there exists a sequence $(r_n)_n\subseteq A$ such that $r_n$ tends to $r$. Hence, $\norm{\delta_r-\delta_{r_n}}=\sup_{\norm{f}=1}\abs{f(r)-f(r_n)}$ tends to $0$.
    
    Hence, $X_K=\overline{span\{\delta_t: t\in K\}}=\overline{span\overline{\{\delta_t: t\in A\}}}=\overline{span\{\delta_t: t\in A\}}\subseteq \overline{Per(Q_K)}$. So, $Per(Q_K)$ is dense in $X_K$.
     \end{proof}
      The set $A\subset \N$ is \emph{thick} if $A$ contains arbitrarily long intervals, i.e. for every $L>0$ there exists $n\geq 1$ such that $\{n, n+1, \dots, n+L\}\subset A$.
       
    Now, in order to obtain a mixing operator $T$ such that $(T, T^2)$ is not $d$-syndetic, it will be enough to show that the sequence of operators $\big(2Q_K^{a_n}-Q_K^{2a_n}\big)_n$ is non-transitive along a thick set $A=(a_n)$. 
    We have the following result.
    
   \begin{proposition}
   \label{seq.non.univ}
   Let $K$ be the compact subset of $[-\pi, \pi]$ defined in (\ref{def.set.K}), then the sequence $\big(2Q_K^{k_{n,r}}-Q_K^{2k_{n,r}}\big)_{n\in\N, 0\leq r \leq n}$ of continuous linear operators on $X_K$ is non-transitive, where $k_{n, r}=2^{6^n}-r$ with $0\leq r\leq n, n\in \N$.
   \end{proposition}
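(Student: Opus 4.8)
The plan is to show that the sequence of operators $\big(2Q_K^{k_{n,r}}-Q_K^{2k_{n,r}}\big)$ fails transitivity by exhibiting two non-empty open sets whose return set is empty. First I would use the diagonal action established in Lemma \ref{Q_K_chaotic}: since $Q_K^m\delta_t=e^{imt}\delta_t$ for $t\in K$, each operator in the sequence acts on $\delta_t$ by the scalar $2e^{ik_{n,r}t}-e^{2ik_{n,r}t}$. Writing $z=e^{ik_{n,r}t}$, this scalar is $2z-z^2=1-(1-z)^2$, which lies in a region of $\C$ controlled by how close $z$ is to $1$, i.e.\ by how close $k_{n,r}t$ is to a multiple of $2\pi$. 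The heart of the argument is therefore a number-theoretic control of the quantities $k_{n,r}t \pmod{2\pi}$ for $t\in K$.

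The key step is to estimate $k_{n,r}t \bmod 2\pi$ for every $t\in K$ and every admissible pair $(n,r)$. Recall $t=\sum_{j\geq1}2\pi\epsilon_j/2^{6^j}$ and $k_{n,r}=2^{6^n}-r$ with $0\leq r\leq n$. I would split $k_{n,r}t$ into contributions from $j<n$, $j=n$, and $j>n$. For $j<n$ the factor $2^{6^n}/2^{6^j}$ is an even integer, so $2^{6^n}\cdot t$ mod $2\pi$ picks up nothing from the low-order terms; the terms with $j>n$ are super-exponentially small because of the gap between consecutive exponents $6^j$, so they contribute a negligible amount to the argument. Thus $2^{6^n}t$ is, modulo $2\pi$, essentially $2\pi\epsilon_n\cdot 2^{6^n}/2^{6^n}=2\pi\epsilon_n$ up to a tiny error, i.e.\ close to $0$. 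The correction $-rt$ with $0\leq r\leq n$ is likewise small since $t\leq 2\pi\sum 2^{-6^j}$ and $n$ grows only polynomially against the doubly-exponential scale; hence $k_{n,r}t$ stays within a small neighbourhood of a multiple of $2\pi$ for all $t\in K$. Consequently $z=e^{ik_{n,r}t}$ is uniformly close to $1$, so $2z-z^2$ is uniformly close to $1$, for every $t\in K$ and every $(n,r)$.

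Once this uniform estimate is in hand, I would pass from the eigenvectors $\delta_t$ to the whole space $X_K=\overline{\spa\{\delta_t:t\in K\}}$. Since each operator $2Q_K^{k_{n,r}}-Q_K^{2k_{n,r}}$ acts as a multiplication operator that is uniformly close to the identity on the spanning functionals $\delta_t$, a density and continuity argument shows that the image $\big(2Q_K^{k_{n,r}}-Q_K^{2k_{n,r}}\big)(V)$ of a small ball $V$ around some fixed vector $v_0$ stays within a controlled neighbourhood of $v_0$ for all $(n,r)$. Choosing $U_0$ to be this small ball $V$ and $U_1$ to be a disjoint open set far from that neighbourhood then forces the return set to be empty, which is exactly non-transitivity of the sequence.

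The main obstacle I expect is the modular estimate in the second step: one must show that $k_{n,r}t\bmod 2\pi$ is small \emph{uniformly} in $t\in K$ and in the $n+1$ shifts $r$, and the whole construction of $K$ (the spacing $2^{6^n}$ and the choice of $k_{n,r}=2^{6^n}-r$) is engineered precisely so that both the high-frequency tail and the shift correction remain negligible. Making the error bounds explicit, and verifying that they survive the density argument when moving from $\delta_t$ to arbitrary elements of $X_K$, is the delicate part; the remaining topological step is then routine.
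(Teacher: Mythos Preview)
Your number-theoretic estimate contains a genuine error. You claim that the correction term $-rt$ is ``likewise small since $t\leq 2\pi\sum 2^{-6^j}$ and $n$ grows only polynomially against the doubly-exponential scale,'' but this is false: $t$ can be as large as $2\pi\sum_{j\ge1}2^{-6^j}\approx 2\pi/64$, while $r$ ranges up to $n$, so $|rt|$ is of order $n\pi/32$, which is unbounded in $n$. Concretely, take $t=2\pi/64\in K$ and $r=n$; then $k_{n,r}t\equiv -n\cdot 2\pi/64\pmod{2\pi}$, and for $n=32$ one gets $e^{ik_{n,r}t}=-1$, hence $2z-z^2=-3$, far from $1$. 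So the operators are \emph{not} uniformly close to the identity, and your scheme of trapping the image of a small ball near $v_0$ collapses.

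What does survive is that the eigenvalue scalars $2e^{ik_{n,r}t}-e^{2ik_{n,r}t}$ are uniformly bounded (by $3$), and the paper's route is precisely to upgrade this to a uniform \emph{operator-norm} bound for $2Q_K^{k_{n,r}}-Q_K^{2k_{n,r}}$ on $X_K$; uniform boundedness then immediately yields non-transitivity. But this upgrade is exactly the delicate step you flagged, and it does \emph{not} follow from a naive density argument: boundedness of a diagonal action on a spanning (non-orthogonal) family does not give boundedness on the closed span. The paper handles this via Lemma~\ref{convenient-lema_A.3}, constructing functions $f_{k_{n,r}}\in W^{2,2}[-\pi,\pi]$ with uniformly bounded $W^{2,2}$-norm that agree with the multiplier $x\mapsto 2e^{ik_{n,r}x}-e^{2ik_{n,r}x}$ on a neighbourhood of $K$ (this uses the super-lacunary structure of $K$). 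Since multiplication by such $f_{k_{n,r}}$ is a bounded operator on $\mathscr{H}=W^{2,2}[-\pi,\pi]$ with norm controlled by $\|f_{k_{n,r}}\|_{W^{2,2}}$, its adjoint restricted to $X_K$ coincides with $2Q_K^{k_{n,r}}-Q_K^{2k_{n,r}}$ and has uniformly bounded norm. That multiplier construction is the essential idea missing from your sketch.
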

   Now we are in position to prove Theorem \ref{op.mixing.tuple.not.d-S}.
 
 We adopt the same sketch of proof of Theorem 3.8 \cite{BMPS}, still we expose here all the details.
 We need to show a mixing and chaotic operator $T$ such that $(T, T^2)$ is not $d$-syndetic. 
 
   Let $K$ be the compact set defined in (\ref{def.set.K}). By Proposition \ref{Q_K.tran.or.non-trans} and Lemma \ref{Q_K_chaotic}, $Q_K$ is mixing and chaotic operator on the separable infinite dimensional Hilbert space $X_K$. On the other hand, by Proposition \ref{seq.non.univ}, $\big(2Q_K^{a_n}-Q_K^{2a_n}\big)_{n\in \N}$ is non-transitive for some thick set $A$ written increasingly as $A=(a_n)_n$. Hence, there exists non-empty open sets $U, V$ in $X_K$ such that $\big(2Q_K^{a_n}-Q_K^{2a_n}\big)(U)\cap V=\emptyset$, for any $n\in \N$. In other words, 
   \[
   \big\{n\in \N: (2Q_K^n-Q_K^{2n})(U)\cap V\neq\emptyset\big\}\cap A=\emptyset,
   \]
    i.e. the set $\big\{n\in \N: (2Q_K^n-Q_K^{2n})(U)\cap V\neq\emptyset\big\}$ cannot be syndetic. In particular, $(Q_K, Q_K^2)$ is not $d$-syndetic. Indeed, pick a non-empty open set $V_0$ such that $2V_0-V_0\subseteq V$(denote $B(x;r)$ the open ball centered at $x$ in $X_K$ with radius r. Pick $x\in X_K, r\in \R_+$ such that $B(x;r)\subset V$, then set $V_0:=B(x;r/3)$). 
    Hence,
    \[
    \big\{n\in \N: U\cap Q_K^{-n}(V_0)\cap Q_K^{-2n}(V_0)\neq\emptyset\big\}\subseteq \big\{n\in \N: (2Q_K^n-Q_K^{2n})(U)\cap V\neq\emptyset\big\}.
    \]
  Consequently, $\big\{n\in \N: U\cap Q_K^{-n}(V_0)\cap Q_K^{-2n}(V_0)\neq\emptyset\big\}$ cannot be a syndetic set and then $(Q_K, Q_K^2)$ is not $d$-syndetic. Since all separable infinite dimensional Hilbert spaces are isomorphic to $l^2$, there is a mixing and chaotic $T\in \Lin(l^2)$ such that $(T, T^2)$ is not $d$-syndetic. This concludes the proof of Theorem \ref{op.mixing.tuple.not.d-S}.   
   
   In order to close this subsection, we need to prove Proposition \ref{seq.non.univ}, which follows exactly the same sketch of proof of Proposition 3.10 \cite{BMPS}, except that instead of using Lemma A.3 \cite{BMPS} as the authors of \cite{BMPS} did, we just use Lemma \ref{convenient-lema_A.3} (below) in an analogous way. So it suffices to give the proof of Lemma \ref{convenient-lema_A.3}. 
   
   Now, in order to prove Lemma \ref{convenient-lema_A.3} we need to quote another two lemmas proved in \cite{BMPS} that we state without proof.
   \begin{lemma} Lemma A.1 \cite{BMPS}
   
   \label{Lemma_A.1_BMPS} 
   Let $f\in W^{2,2}[-\pi, \pi], f(-\pi)=f(\pi), f'(-\pi)=f'(\pi), c_0=\norm{f}_{L^{\infty}[-\pi, \pi]}$ and $c_1=\norm{f''}_{L^2[-\pi, \pi]}$. Then $\norm{f}_{W^2[-\pi, \pi]}\leq \sqrt{3c_1^2+c_0^2}$.
   \end{lemma}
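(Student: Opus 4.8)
The plan is to move to the Fourier side, where the three ingredients of the $W^{2,2}$-norm become weighted $\ell^2$-sums and the boundary data of $f$ collapse to a single statement about its mean. The conditions $f(-\pi)=f(\pi)$ and $f'(-\pi)=f'(\pi)$ are exactly what is needed to differentiate termwise: integrating by parts on $[-\pi,\pi]$, the boundary contributions cancel, so writing $f=\sum_{n\in\Z}a_n e^{inx}$ one gets $f'=\sum_{n\in\Z}(in)a_n e^{inx}$ and $f''=\sum_{n\in\Z}(-n^2)a_n e^{inx}$, all convergent in $L^2$ because $f''\in L^2[-\pi,\pi]$. By Parseval each $\int_{-\pi}^\pi\abs{f^{(j)}}^2$ becomes a constant multiple of $\sum_{n\in\Z} n^{2j}\abs{a_n}^2$ for $j=0,1,2$.

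Granting this, I would write $\norm{f}_{W^{2,2}[-\pi,\pi]}^2$ as a weighted sum with weights $1+n^2+n^4$ and $c_1^2=\norm{f''}_{L^2[-\pi,\pi]}^2$ as the same sum with weights $n^4$. The elementary inequality $1+n^2+n^4\le 3n^4$, valid for every integer $n\neq 0$, then absorbs every nonzero Fourier mode into $3c_1^2$, leaving only the constant mode $n=0$, on which all derivatives vanish and therefore carry no information.

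The one real point — and what I expect to be the crux — is the zeroth mode. Here $a_0$ is (a multiple of) the mean $\int_{-\pi}^\pi f$, so $\abs{a_0}\le \norm{f}_{L^\infty[-\pi,\pi]}=c_0$; this is precisely the step that forces the $L^\infty$-norm, rather than an $L^2$-estimate, into the final bound, and it is the only place where control by the second derivative is unavailable. Assembling the contributions of the nonzero and zero modes yields $\norm{f}_{W^{2,2}[-\pi,\pi]}^2\le 3c_1^2+c_0^2$. If one prefers to avoid Fourier series altogether, the same estimate can be obtained on the real side: periodicity gives $\int_{-\pi}^\pi f'=f(\pi)-f(-\pi)=0$, so Wirtinger's inequality bounds $\int\abs{f'}^2$ by $\int\abs{f''}^2$, and applying it to $f$ minus its mean bounds $\int\abs{f-a_0}^2$ by $\int\abs{f'}^2$; splitting off the mean $a_0$ and estimating it by $c_0$ reproduces the constants $3$ and $1$.
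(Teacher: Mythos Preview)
The paper does not supply its own proof of this lemma; it is quoted from \cite{BMPS} and explicitly stated without proof. Your Fourier-series approach (with the Wirtinger-inequality argument offered as an alternative) is the natural one and is correct in outline.

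There is, however, a numerical slip in your assembly step, and it actually exposes a defect in the statement itself. The $n=0$ contribution to $\int_{-\pi}^{\pi}|f|^{2}$ equals $2\pi|a_{0}|^{2}$ (the squared $L^{2}$-norm of the constant function $a_{0}$ over an interval of length $2\pi$), so the estimate $|a_{0}|\le c_{0}$ yields $2\pi c_{0}^{2}$, not $c_{0}^{2}$. Your argument therefore proves
\[
\norm{f}_{W^{2,2}[-\pi,\pi]}^{2}\le 3c_{1}^{2}+2\pi c_{0}^{2},
\]
not the inequality as printed. In fact the printed inequality cannot hold: take $f\equiv 1$; then $c_{0}=1$, $c_{1}=0$, while $\norm{f}_{W^{2,2}[-\pi,\pi]}=\sqrt{2\pi}>1=\sqrt{3c_{1}^{2}+c_{0}^{2}}$. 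So the statement as reproduced here is missing a factor of $2\pi$ on the $c_{0}^{2}$ term. This is harmless for the only place the lemma is used (the last line of the proof of Lemma~\ref{convenient-lema_A.3}), since $\sqrt{3\cdot 1104+2\pi\cdot 81}<62<64$.
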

   
   \begin{lemma} Lemma A.2 \cite{BMPS}
   
   \label{Lemma_A.2_BMPS}
   Let $-\infty <\alpha<\beta<\infty$ and $a_0, a_1, b_0, b_1\in \C$. Then there exists $f\in C^2[\alpha, \beta]$ such that
   \[
   f(\alpha)=a_0, \quad f'(\alpha)=a_1, \quad f(\beta)=b_0, \quad f'(\beta)=b_1,
   \]
   \[
   \norm{f}_{L^{\infty}[\alpha, \beta]}\leq \abs{a_0+b_0}/2+\abs{a_0-b_0}/2+(\beta-\alpha)(\abs{a_1}+\abs{b_1})/5,
   \]
   \[
   \norm{f''}^2_{L^2[\alpha, \beta]}\leq \frac{24\abs{a_0-b_0}^2}{(\beta-\alpha)^3}+\frac{12\cdot(\abs{a_1}^2+\abs{b_1}^2)}{\beta-\alpha}.
   \]
   \end{lemma}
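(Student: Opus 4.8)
The plan is to take $f$ to be the unique cubic Hermite interpolant matching the four boundary data. Any cubic is $C^\infty$, hence lies in $C^2[\alpha,\beta]$, so existence is automatic and the entire content of the lemma is the verification of the two quantitative bounds. To keep the constants transparent I would first rescale to the unit interval: set $L=\beta-\alpha$ and $t=(x-\alpha)/L$, and put $g(t)=f(\alpha+Lt)$, so the data become $g(0)=a_0$, $g(1)=b_0$, $g'(0)=La_1$, $g'(1)=Lb_1$. Writing $g$ in the standard cubic Hermite basis
\[
g(t)=a_0 h_{00}(t)+b_0 h_{01}(t)+La_1\,h_{10}(t)+Lb_1\,h_{11}(t),
\]
with $h_{00}=2t^3-3t^2+1$, $h_{01}=-2t^3+3t^2$, $h_{10}=t(t-1)^2$ and $h_{11}=t^2(t-1)$, a direct check of the values and first derivatives at $t=0,1$ confirms that $g$ realizes the prescribed data, and hence so does $f$.

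For the sup-norm bound I would split $g$ into its value part $a_0 h_{00}+b_0 h_{01}$ and its slope part $L(a_1 h_{10}+b_1 h_{11})$. The point is that $h_{00},h_{01}\geq 0$ on $[0,1]$ and $h_{00}+h_{01}\equiv 1$, so the value part is bounded by $\max(\abs{a_0},\abs{b_0})$; and since $\abs{a_0+b_0}/2+\abs{a_0-b_0}/2=\max(\abs{a_0},\abs{b_0})$, this is exactly the first two terms of the claimed bound. For the slope part one computes the extrema of $h_{10}$ and $h_{11}$ explicitly, finding $\max_{[0,1]}\abs{h_{10}}=\max_{[0,1]}\abs{h_{11}}=4/27$, whence $\abs{a_1 h_{10}+b_1 h_{11}}\leq \tfrac{4}{27}(\abs{a_1}+\abs{b_1})$. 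Because $4/27<1/5$, the slope part is bounded by $L(\abs{a_1}+\abs{b_1})/5$, giving the third term. Summing these two estimates yields the stated $L^\infty$ bound.

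For the $L^2$ bound on $f''$, the substitution gives $\norm{f''}_{L^2[\alpha,\beta]}^2=L^{-3}\int_0^1 \abs{g''(t)}^2\,dt$, so it suffices to show $\int_0^1\abs{g''}^2\,dt\leq 24\abs{a_0-b_0}^2+12L^2(\abs{a_1}^2+\abs{b_1}^2)$. Differentiating twice and grouping terms, $g''(t)=(a_0-b_0)(12t-6)+La_1(6t-4)+Lb_1(6t-2)$, a combination of three affine functions $u_1,u_2,u_3$ with coefficients $a_0-b_0,\ La_1,\ Lb_1$. I would then compute the Gram matrix of $u_1,u_2,u_3$ on $[0,1]$, obtaining diagonal entries $\int u_1^2=12$, $\int u_2^2=\int u_3^2=4$ and off-diagonals $\int u_1u_2=\int u_1u_3=6$, $\int u_2u_3=2$. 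Expanding $\int\abs{g''}^2$ and estimating each cross term by $2\operatorname{Re}(z\bar w)\leq \abs{z}^2+\abs{w}^2$, the mixed contributions are absorbed so that the total is at most $24\abs{a_0-b_0}^2+12\abs{La_1}^2+12\abs{Lb_1}^2$, which is precisely what is needed.

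The construction itself presents no real obstacle; the delicate point is the exact accounting of constants. Both bounds are tight enough that they only work because of the particular structure of the Hermite basis: the partition-of-unity identity $h_{00}+h_{01}\equiv1$ together with nonnegativity, the numerical fact $4/27<1/5$ for the slope functions, and the fact that the Young-type estimates on the Gram cross-terms collapse to exactly the coefficients $24$ and $12$. I expect the second-derivative estimate to be the most error-prone step, since one must carry the complex coefficients carefully and check that the AM--GM absorption of the three cross terms leaves exactly $24$ and $12$ rather than something larger.
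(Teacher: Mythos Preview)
The paper does not prove this lemma: it is quoted verbatim from \cite{BMPS} and explicitly stated without proof (see the sentence preceding Lemma~\ref{Lemma_A.1_BMPS}). There is therefore nothing in the present paper to compare your argument against.

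Your proof via the cubic Hermite interpolant is correct and is in fact the natural construction behind the specific constants $4/27<1/5$, $24$, and $12$. One small correction: for complex $a_0,b_0$ the equality $\abs{a_0+b_0}/2+\abs{a_0-b_0}/2=\max(\abs{a_0},\abs{b_0})$ fails in general (e.g.\ $a_0=1$, $b_0=i$ gives $\sqrt{2}>1$). What you actually have is the convex-combination bound $\abs{a_0h_{00}+b_0h_{01}}\le\max(\abs{a_0},\abs{b_0})$, and the triangle inequality gives $\max(\abs{a_0},\abs{b_0})\le\abs{a_0+b_0}/2+\abs{a_0-b_0}/2$; the inequality goes the right way, so the $L^\infty$ estimate still holds. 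Your Gram-matrix computation for the $L^2$ bound on $f''$ is accurate, and the AM--GM absorption of the cross terms does give exactly $24$ and $12$ as you expected.
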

   
    \begin{lemma}
    \label{convenient-lema_A.3}
     There exists a sequence $(f_{2^{6^n}-r})_{n\in \N, 0\leq r\leq n}$ of $2\pi$-periodic functions on $\R$ such that $\left.f_{2^{6^n}-r}\right|_{[-\pi, \pi]}\in W^{2, 2}[-\pi, \pi]$, the sequence $\big(\norm{f_{2^{6^n}-r}}_{W^{2, 2}[-\pi, \pi]}\big)_{n, r}$ is bounded and $f_{2^{6^n}-r}(x)=2 e^{i ({2^{6^n}-r}) x}-e^{2i({2^{6^n}-r})x}$ whenever $\abs{x-\frac{2\pi m}{2^{6^n}}}\leq 2/(2^{6^n})^5$, for some $m\in \Z$ and every $n\in \N, 0\leq r \leq n$.
    \end{lemma}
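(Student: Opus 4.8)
The plan is to build each $f_{k}$, with $k=k_{n,r}=2^{6^n}-r$, by pasting the target trigonometric polynomial $g_k(x):=2e^{ikx}-e^{2ikx}$ on a collection of tiny intervals centered at the grid points $t_m:=2\pi m/2^{6^n}$ and filling in the complementary arcs with the controlled interpolants furnished by Lemma \ref{Lemma_A.2_BMPS}. Concretely, on each good interval $I_m:=[t_m-\rho,t_m+\rho]$ with $\rho:=2/(2^{6^n})^5$ I would set $f_k:=g_k$, which produces exactly the prescribed values; since $\rho$ is far smaller than the grid spacing $2\pi/2^{6^n}$, these intervals are pairwise disjoint. On each remaining arc $[t_m+\rho,\,t_{m+1}-\rho]$ I apply Lemma \ref{Lemma_A.2_BMPS} with the boundary data $a_0=g_k(t_m+\rho)$, $a_1=g_k'(t_m+\rho)$, $b_0=g_k(t_{m+1}-\rho)$, $b_1=g_k'(t_{m+1}-\rho)$, obtaining a $C^2$ piece matching $f_k$ and $f_k'$ at the endpoints. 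The resulting function is then $C^1$ on $\R$ and piecewise $C^2$, so its restriction to $[-\pi,\pi]$ lies in $W^{2,2}[-\pi,\pi]$. Periodicity is automatic: since $\pm\pi$ are themselves grid points and $g_k$ is $2\pi$-periodic, the two boundary pieces agree together with their first derivatives, giving $f_k(-\pi)=f_k(\pi)$ and $f_k'(-\pi)=f_k'(\pi)$, which is precisely the hypothesis needed to invoke Lemma \ref{Lemma_A.1_BMPS}.

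To bound $\norm{f_k}_{W^{2,2}[-\pi,\pi]}$ uniformly in $n$ and $r$, I would use Lemma \ref{Lemma_A.1_BMPS}, which reduces matters to controlling $c_0=\norm{f_k}_{L^\infty}$ and $c_1=\norm{f_k''}_{L^2}$. The sup-norm is the easy half: on the good intervals $\abs{f_k}=\abs{g_k}\le 3$, and on each complementary arc the $L^\infty$-estimate of Lemma \ref{Lemma_A.2_BMPS} bounds $f_k$ by $\abs{a_0}+\abs{b_0}$ plus a term of size $(t_{m+1}-t_m)(\abs{a_1}+\abs{b_1})$; since $\abs{g_k'}\le 4k$ while the arc length is of order $2\pi/2^{6^n}\approx 2\pi/k$, this last term is $O(1)$, so $c_0$ is bounded. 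For $c_1$ I would split $\norm{f_k''}_{L^2}^2$ into the contribution of the good intervals and that of the interpolation arcs. On a good interval $f_k''=g_k''$ with $\abs{g_k''}\le 6k^2$, so each contributes at most $36k^4\cdot 2\rho$; the exponent in $\rho=2/(2^{6^n})^5$ is tuned precisely so that $k^4\rho=O((2^{6^n})^{-1})$, which keeps the total good-interval energy under control.

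The step I expect to be the main obstacle is the estimate of $\norm{f_k''}_{L^2}^2$ over the interpolation arcs, where Lemma \ref{Lemma_A.2_BMPS} produces the two terms $24\abs{a_0-b_0}^2/(t_{m+1}-t_m)^3$ and $12(\abs{a_1}^2+\abs{b_1}^2)/(t_{m+1}-t_m)$. The crude bounds $\abs{a_0-b_0}=O(1)$ and $\abs{a_i}=O(k)$ are far too lossy here and do not sum to a bounded quantity; what must be exploited is the near-flatness of $g_k=2e^{ikx}-e^{2ikx}$ at the grid points. Writing $u=e^{ikx}$ one has $g_k=2u-u^2=1-(1-u)^2$, and at $t_m$ the phase satisfies $e^{ikt_m}=e^{-2\pi i rm/2^{6^n}}$, so that the increment of $u$ between neighbouring grid points is governed by the small quantity $2\pi r/2^{6^n}$ with $r\le n$, while $k\rho$ is minuscule on each good interval. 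The delicate point is to convert this cancellation into genuinely small bounds for $\abs{a_0-b_0}$ and for the relevant differences of the $a_i,b_i$, and then to check that the super-exponential growth embodied in the exponent $6^n$ makes the number of arcs times their individual energies sum to a bound independent of $n$ and $r$. This cancellation analysis is the heart of the argument; once both sums are shown to be $O(1)$, Lemma \ref{Lemma_A.1_BMPS} yields $\sup_{n,r}\norm{f_{2^{6^n}-r}}_{W^{2,2}[-\pi,\pi]}<\infty$, which together with the boundary matching above completes the construction.
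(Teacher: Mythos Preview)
Your construction differs from the paper's in a way that creates a genuine obstacle. The paper does \emph{not} interpolate on each of the $2^{6^n}$ gaps separately; instead it builds $f_{k_{n,r}}$ to be periodic with the shorter period $2\pi/2^{6^n}$, sets $f_{k_{n,r}}=h_{k_{n,r}}:=2e^{ik_{n,r}x}-e^{2ik_{n,r}x}$ on the single interval $[-\rho,\rho]$ about the origin, and invokes Lemma~\ref{Lemma_A.2_BMPS} on just \emph{one} complementary arc per period. The reason this works is that $h'_{k_{n,r}}(0)=0$, so at the endpoints $\pm\rho$ one has $|h'_{k_{n,r}}(\pm\rho)|\le 2k\,|1-e^{ik\rho}|\le 2k^2\rho=O\bigl((2^{6^n})^{-3}\bigr)$, and both terms in the estimate of Lemma~\ref{Lemma_A.2_BMPS} are then genuinely tiny.

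In your scheme the interpolation endpoints are $t_m\pm\rho$, and there $g_k'(t_m)=2ik\,e^{ikt_m}(1-e^{ikt_m})$ with $e^{ikt_m}=e^{-2\pi i rm/2^{6^n}}$. As $m$ ranges over $\{0,\dots,2^{6^n}-1\}$ these phases visit the whole unit circle (for odd $r$, say), so for most $m$ one has $|1-e^{ikt_m}|\asymp 1$ and hence $|a_1|,|b_1|\asymp k$. The ``near-flatness at the grid points'' you invoke therefore holds only at $t_0=0$, not generically. The cancellation you describe controls neighbouring \emph{differences} --- indeed $|a_0-b_0|=O(n/2^{6^n})$ and similarly for $a_1-b_1$ --- but the second term in Lemma~\ref{Lemma_A.2_BMPS} is $12(|a_1|^2+|b_1|^2)/(\beta-\alpha)$, which depends on the individual sizes, not on differences. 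With $|a_1|\asymp k$ and $\beta-\alpha\asymp 2\pi/2^{6^n}$ this term is of order $(2^{6^n})^3$ per arc, and summing over $2^{6^n}$ arcs gives $(2^{6^n})^4\to\infty$. Even the first term fails: $|a_0-b_0|^2/(\beta-\alpha)^3\asymp n^2\cdot 2^{6^n}$ per arc, totalling $n^2(2^{6^n})^2$. So Lemma~\ref{Lemma_A.2_BMPS} as stated cannot close your argument; you would need a sharper interpolation inequality that simultaneously exploits the smallness of $b_0-a_0-a_1(\beta-\alpha)$ and of $b_1-a_1$, which goes beyond the tools the paper supplies.
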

    
    \begin{proof}
    We obtain the proof of this lemma doing convenient slight modifications in the proof of Lemma A.3 \cite{BMPS}.
    
    For $n\in \N, 0\leq r \leq n$, let $k_{n, r}=2^{6^n}-r$ and $h_{k_{n,r}}=2e^{ik_{n,r}x}-e^{2i k_{n, r}x}$. Note that $h_{k_{n,r}}$ is periodic with period $2\pi/k_{n, r}$. Let also $\alpha_{n,r}=\frac{2}{(2^{6^n})^5}-\frac{2\pi}{2^{6^n}}$ and $\beta_{n,r}=-2/(2^{6^n})^5$. By Lemma \ref{Lemma_A.2_BMPS}, there is $g_{k_{n,r}}\in C^2[\alpha_{n,r}, \beta_{n,r}]$ such that
    \[
     g_{k_{n,r}}(\alpha_{n,r})=h_{k_{n,r}}(2/(2^{6^n})^5), \qquad g_{k_{n,r}}(\beta_{n,r})=h_{k_{n,r}}(-2/(2^{6^n})^5),
     \]
    \begin{equation}
    \label{values_of_g_k_n,r}
    g'_{k_{n,r}}(\alpha_{n,r})=h'_{k_{n,r}}(2/(2^{6^n})^5), \qquad g'_{k_{n,r}}(\beta_{n,r})=h'_{k_{n,r}}(-2/(2^{6^n})^5),
    \end{equation}
    \[
    \norm{g_{k_{n,r}}}_{L^\infty_{[\alpha_{n,r}, \beta_{n,r}]}}\leq \max\{\abs{h_{k_{n,r}}(2/(2^{6^n})^5)}, \abs{h_{k_{n,r}}(-2/(2^{6^n})^5)}\}
    \]
    \begin{equation}
    \label{estimate_L_inf}
+\frac{(\beta_{n,r}-\alpha_{n,r})}{5}\big(\abs{h'_{k_{n,r}}(2/(2^{6^n})^5)}+\abs{h'_{k_{n,r}}(-2/(2^{6^n})^5)}\big),
    \end{equation}
    \[
    \norm{g''_{k_{n,r}}}^2_{L^2_{[\alpha_{n,r}, \beta_{n,r}]}}\leq  \frac{24 \abs{h_{k_{n,r}}(2/(2^{6^n})^5)-h_{k_{n,r}}(-2/(2^{6^n})^5)}^2}{(\beta_{n,r}-\alpha_{n,r})^3}
    \]
    \begin{equation}
    \label{estimate_L_2}
     + 12 \frac{\abs{h'_{k_{n,r}}(2/(2^{6^n})^5)}^2+\abs{h'_{k_{n,r}}(-2/(2^{6^n})^5)}^2}{(\beta_{n,r}-\alpha_{n,r})}.
    \end{equation}
    
    The equalities (\ref{values_of_g_k_n,r}) imply that there is a unique $f_{k_{n, r}}\in C^1(\R)$ such that $f_{k_{n, r}}$ is periodic with period $2\pi/2^{6^n}$,  $\left.f_{k_{n, r}}\right|_{[\alpha_{n, r}, \beta_{n,r}]}=g_{k_{n,r}}$ and $\left.f_{k_{n, r}}\right|_{[\beta_{n, r}, \alpha_{n,r} +2\pi/2^{6^n}]}=h_{k_{n,r}}$.
    
    Periodicity of $f_{k_{n, r}}$ with period $2\pi/2^{6^n}$ and the equality $\left.f_{k_{n, r}}\right|_{[\beta_{n, r}, \alpha_{n,r} +2\pi/2^{6^n}]}=h_{k_{n,r}}$ imply that $f_{k_{n,r}}(x)=2 e^{i ({2^{6^n}-r}) x}-e^{2i({2^{6^n}-r})x}$ whenever $\abs{x-\frac{2\pi m}{2^{6^n}}}\leq 2/(2^{6^n})^5$, for every $m\in \Z$ with $\abs{2m}\leq 2^{6^n}$ and every $n\in \N, 0\leq r \leq n$. Since $f_{k_{n, r}}$ is piecewise $C^2, \left.f_{k_{n, r}}\right|_{[-\pi, \pi]}\in W^{2, 2}[-\pi, \pi]$. It remains to verify that the sequence $\big(\norm{f_{k_{n, r}}}_{W^{2, 2}[-\pi, \pi]}\big)_{n, r}$ is bounded.

    Using the inequality $\abs{e^{it}-e^{is}}\leq \abs{t-s}$ for $t, s \in \R$, we have 
    \[
    \abs{h'_{k_{n,r}}(2/(2^{6^n})^5)}=\abs{h'_{k_{n,r}}(-2/(2^{6^n})^5)}\leq 2 (2^{6^n}-r)^2 \cdot 2/(2^{6^n})^5.
    \]
    Hence by (\ref{estimate_L_inf}),
    \[
     \norm{f_{k_{n,r}}}_{L^\infty_{[\alpha_{n,r}, \beta_{n,r}]}}\leq 3+ 5^{-1}(\frac{2\pi}{2^{6^n}}-\frac{4}{(2^{6^n})^5})\cdot 8 \frac{(2^{6^n}-r)^2}{ (2^{6^n})^5}<9.
    \]
    Since $\norm{h_{k_{n,r}}}_{L^\infty_{[\beta_{n,r}, \alpha_{n,r}+2\pi/2^{6^n}]}}\leq 3$ and $f_{k_{n,r}}$ is $2\pi/2^{6^n}$-periodic, we obtain,
    \begin{equation}
    \label{est_L_inf}
    \norm{f_{k_{n, r}}}_{L^{\infty}_{[-\pi, \pi]}}\leq \max\{3, 9\}=9.
    \end{equation}
    
    Next, 
    \[
    \abs{h_{k_{n,r}}(2/(2^{6^n})^5)-h_{k_{n,r}}(-2/(2^{6^n})^5)}=
    \]
    \[
     \abs{2\Big(e^{i(2^{6^n}-r)\frac{2}{(2^{6^n})^5}}- e^{i(2^{6^n}-r)\frac{(-2)}{(2^{6^n})^5}}\Big)- \Big(e^{2i(2^{6^n}-r)\frac{2}{(2^{6^n})^5}}- e^{2i(2^{6^n}-r)\frac{(-2)}{(2^{6^n})^5}}\Big)}=
    \]
    \[
    \abs{4 \sin{\Big(2 \cdot \frac{(2^{6^n}-r)}{(2^{6^n})^5}\Big)}-2 \sin{\Big(4 \cdot \frac{(2^{6^n}-r)}{(2^{6^n})^5}\Big)}}=
    \]
    \[
    4 \sin{\Big(2 \cdot \frac{(2^{6^n}-r)}{(2^{6^n})^5}\Big)}\Big(1-\cos{\big(2 \cdot \frac{(2^{6^n}-r)}{(2^{6^n})^5}\big)}\Big)=
    \]
    \[
    16 \sin^3{\Big(\frac{(2^{6^n}-r)}{(2^{6^n})^5}\Big)}\cos\Big(\frac{(2^{6^n}-r)}{(2^{6^n})^5}\Big)\leq
    \]
    \[
    16 \Big(\frac{(2^{6^n}-r)}{(2^{6^n})^5}\Big)^3\leq \frac{16}{(2^{6^n})^{12}}.
    \]
    
    On the other hand, 
    \[
    \frac{\abs{h'_{k_{n,r}}(2/(2^{6^n})^5)}^2+\abs{h'_{k_{n,r}}(-2/(2^{6^n})^5)}^2}{(\beta_{n,r}-\alpha_{n,r})}\leq \frac{32 \frac{(2^{6^n}-r)^4}{(2^{6^n})^{10}}}{\frac{2\pi}{2^{6^n}}-\frac{4}{(2^{6^n})^5}}\leq
    \]
    \[
     \frac{\frac{32}{(2^{6^n})^6}}{\frac{2\pi}{2^{6^n}}-\frac{4}{(2^{6^n})^5}}=\frac{32}{2\pi(2^{6^n})^5-4\cdot 2^{6^n}}\leq \frac{32}{2\pi(2^{6^n})^5-4 (2^{6^n})^5}\leq \frac{16}{(2^{6^n})^5}.
    \]
    Hence by (\ref{estimate_L_2}),
    \[
     \norm{f''_{k_{n,r}}}^2_{L^2_{[\alpha_{n,r}, \beta_{n,r}]}}\leq 24\cdot \frac{(\frac{16}{(2^{6^n})^{12}})^2}{\Big(\frac{2\pi}{2^{6^n}}-\frac{4}{(2^{6^n})^5}\Big)^3}+12 \cdot \frac{16}{(2^{6^n})^5}\leq 
    \]
    \[
    24\cdot \frac{16^2 \cdot(2^{6^n})^{-24}}{\Big(\frac{2\pi}{2^{6^n}}-\frac{4}{2^{6^n}}\Big)^3}+12 \cdot \frac{16}{(2^{6^n})^5}\leq 
    \]
    \[
    \frac{24 \cdot 16^2}{8\cdot (2^{6^n})^{21}}+ \frac{12 \cdot 16}{(2^{6^n})^5}\leq \frac{960}{(2^{6^n})^5}.
    \]
    
    Since $\abs{h''_{k_{n,r}}(x)}\leq 6 (k_{n,r})^2$ for $x\in [\beta_{n, r}, \alpha_{n, r} +2\pi/2^{6^n}]$ we have,
    \[
     \norm{f''_{k_{n,r}}}^2_{L^2_{[\beta_{n, r}, \alpha_{n, r} +2\pi/2^{6^n}]}}\leq 36 \cdot (2^{6^n}-r)^4\cdot \frac{4}{(2^{6^n})^5}\leq \frac{144}{2^{6^n}}.
    \]
    
    Hence, 
    \[
    \norm{f''_{k_{n,r}}}^2_{L^2_{[\alpha_{n, r}, \alpha_{n, r} +2\pi/2^{6^n}]}}\leq \frac{960}{(2^{6^n})^5}+\frac{144}{2^{6^n}}\leq \frac{1104}{2^{6^n}}.
    \]
    Since $f''_{k_{n,r}}$ is $2\pi/2^{6^n}$-periodic then  
    \begin{equation}
\label{est_L_2}    
    \norm{f''_{k_{n,r}}}^2_{L^2_{[-\pi, \pi]}}=2^{6^n}\cdot \norm{f''_{k_{n,r}}}^2_{L^2_{[\alpha_{n, r}, \alpha_{n, r} +2\pi/2^{6^n}]}}\leq 1104.
    \end{equation}
     Now, by Lemma \ref{Lemma_A.1_BMPS} and using (\ref{est_L_2}) and (\ref{est_L_inf}) we obtain
    \[
    \norm{f_{k_{n,r}}}_{W^{2,2}[-\pi, \pi]}\leq \sqrt{3\cdot 1104+9^2}<64,
    \]
    for each $n\in \N, 0\leq r \leq n$.
    \end{proof}

\subsection{Proof of Theorem \ref{generalcase.rhc.implies.d-synd}}
 The main ingredient of the proof of Theorem \ref{generalcase.rhc.implies.d-synd} is a result due to Bergelson and McCutcheon concerning essential idempotents of $\beta\N$ (the Stone-\v{C}ech compactification of $\N$), and Szemer\'{e}di's theorem for generalized polynomials \cite{BeMc}. So, we need first some background on $\beta\N$.
  
 Recall that a \emph{filter} is a family that is invariant by finite intersections, i.e. $\F$ is a family such that for any $A\in \F, B\in \F$ implies $A\cap B\in \F$.  The collection of all maximal filters (in the sense of inclusion) is denoted by $\beta \N$. Elements of $\beta \N$ are known as \emph{ultrafilters}; endowed with an appropiate topology, $\beta \N$ becomes the Stone-\v{C}ech Compactification of $\N$. Each point $i\in \N$ is identified with a principal ultrafilter $\mathfrak{U}_i:=\{A\subseteq \N:i\in A\}$ in order to obtain an embedding of $\N$ into $\beta \N$. For any $A\subseteq \N$ and $p\in\beta \N$, the closure of $A$, $cl A$ in $\beta \N$ is defined as follows, $p\in clA$ if and only if $A\in p$. Given $p, q\in\beta \N$ and $A\subseteq \N$, the operation $(\N, +)$ can be extended to $\beta \N$ in such a way as to make $(\beta \N, +)$ a compact right topological semigroup. The extended operation can be defined as $A\in p + q$ if and only if $\{n\in \N: -n+A\in q\}\in p$. Now, according to a famous theorem of Ellis, idempotents (with respect to $+$) exist. Let $E(\N)=\{p\in \beta \N: p=p + p\}$ be the collection of idempotents in $\beta \N$. For further details see \cite{HiSt}. Given a family $\mathscr{F}$, the \emph{dual} family $\mathscr{F}^*$ consists of all sets $A$ such that $A\cap F\neq \emptyset$, for every $F\in \mathscr{F}$.
 The following is a well-known result.
 \begin{lemma}
 \label{LemmaBeDo}
 $(1)$ If $\F$ is an ultrafilter, then $\F^*=\F$.
 
 $(2)$ If $\F=\cup_\alpha \F_\alpha$, then $\F^*=\cap_\alpha \F^*_\alpha$.
 
 In particular, whenever $\F$ is a union of some collection of ultrafilters, then $\F^*$ is the intersection of the same collection of ultrafilters.
 \end{lemma}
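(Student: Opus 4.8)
The plan is to handle the two parts separately, noting that part $(2)$ is purely formal while the content of the lemma sits entirely in part $(1)$, and that the final ``in particular'' clause then drops out by combining them.

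For part $(2)$ I would simply unwind the definition of the dual family. By definition $A\in\F^*=(\cup_\alpha\F_\alpha)^*$ exactly when $A\cap F\neq\emptyset$ for every $F\in\cup_\alpha\F_\alpha$. Since belonging to the union means belonging to some $\F_\alpha$, this is equivalent to demanding that for every index $\alpha$ and every $F\in\F_\alpha$ one has $A\cap F\neq\emptyset$, i.e. that $A\in\F_\alpha^*$ for all $\alpha$, i.e. $A\in\cap_\alpha\F_\alpha^*$. Thus $\F^*=\cap_\alpha\F_\alpha^*$, with no obstacle.

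For part $(1)$ I would prove the two inclusions separately. The inclusion $\F\subseteq\F^*$ uses only that $\F$ is a filter which is a family: if $A,F\in\F$ then $A\cap F\in\F$ by closure under finite intersections, and since every member of a family is infinite, $A\cap F$ is in particular non-empty; as $F\in\F$ was arbitrary, this is precisely the statement $A\in\F^*$. The reverse inclusion $\F^*\subseteq\F$ is the crux and is the only place maximality is used. The key is the dichotomy that for every $A\subseteq\N$ either $A\in\F$ or $\N\setminus A\in\F$: once this is available, if $A\in\F^*$ had $A\notin\F$, the dichotomy would force $\N\setminus A\in\F$, and testing the defining condition of $\F^*$ against $F=\N\setminus A$ would give $A\cap(\N\setminus A)=\emptyset$, a contradiction, so $A\in\F$.

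I expect the proof of the dichotomy itself to be the main obstacle, precisely because of the paper's convention that a family may contain only infinite sets, so that maximality is maximality among free filters and one cannot simply invoke the Boolean-algebra statement. I would argue by contradiction: assuming $A\notin\F$ and $\N\setminus A\notin\F$, one first observes that no single $F\in\F$ can have both $A\cap F$ and $(\N\setminus A)\cap F$ finite, since their union is $F$, which is infinite; hence, after intersecting two hypothetical witnesses and using that $\F$ is intersection-closed, either $A\cap F$ is infinite for all $F\in\F$ or $(\N\setminus A)\cap F$ is infinite for all $F\in\F$. In the first case the sets $\{B:B\supseteq A\cap F\text{ for some }F\in\F\}$ constitute a filter of infinite sets strictly containing $\F$, since it contains $A=A\cap\N$ (using $\N\in\F$), contradicting maximality; the second case is symmetric. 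This yields the dichotomy and hence $\F^*=\F$. Finally the ``in particular'' assertion follows immediately: writing $\F=\cup_\alpha\F_\alpha$ with each $\F_\alpha$ an ultrafilter, part $(2)$ gives $\F^*=\cap_\alpha\F_\alpha^*$ and part $(1)$ identifies each $\F_\alpha^*$ with $\F_\alpha$, so $\F^*=\cap_\alpha\F_\alpha$.
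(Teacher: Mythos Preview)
Your argument is correct. Note, however, that the paper does not actually prove this lemma: it is introduced with the words ``The following is a well-known result'' and stated without proof, so there is no proof in the paper to compare against. Your write-up supplies a complete and careful justification, including the one nontrivial point --- the dichotomy $A\in\F$ or $\N\setminus A\in\F$ under the paper's convention that families contain only infinite sets --- which you handle correctly by extending $\F$ to the filter generated by $\F\cup\{A\}$ and invoking maximality.
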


    The collection of essential idempotents is commonly referred to in the literature as $\mathcal{D}$. 

  The collection $\E$ (of $D$-sets) is the union of all idempotents $p\in \beta\N$ such that every member of $p$ has positive upper Banach density. Accordingly, $\E^*$ is the intersection of all such idempotents. 

   The following is a result of ergodic Ramsey theory due to Bergelson and McCutcheon \cite{BeMc}. It is indeed a sort of Szemer\'{e}di's theorem stated originally for generalized polynomials and it will be crucial for proving Theorem \ref{generalcase.rhc.implies.d-synd}.
   \begin{theorem} Theorem 1.25 \cite{BeMc}
   \label{theor.BeMc}
     Let $F\subset \N$ have positive upper Banach density and $g_1\dots, g_r$ be polynomials, then
   \[
   \Big\{k\in\N: \overline{Bd}\big(F\cap (F-g_1(k))\cap \dots \cap (F-g_r(k))\big)>0\Big\}\in \E^*.
   \]
   \end{theorem}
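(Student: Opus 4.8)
The plan is to prove the statement in three moves: transfer the density claim about $F$ to a multiple recurrence claim in a measure preserving system via Furstenberg's correspondence principle, use Lemma \ref{LemmaBeDo} to reduce membership in $\E^*$ to recurrence along a single essential idempotent, and finally establish the required polynomial recurrence along such an idempotent through the IP-polynomial machinery of \cite{BeMc}.

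First I would apply the Furstenberg correspondence principle. Since $F\subset\N$ has positive upper Banach density, there are an invertible measure preserving system $(X,\mathcal{B},\mu,S)$ and a set $A\in\mathcal{B}$ with $\mu(A)=\overline{Bd}(F)>0$ such that, for every finite list of integers $m_1,\dots,m_s$,
\[
\overline{Bd}\big(F\cap(F-m_1)\cap\dots\cap(F-m_s)\big)\geq \mu\big(A\cap S^{-m_1}A\cap\dots\cap S^{-m_s}A\big).
\]
Taking $m_j=g_j(k)$ (normalizing so that each $g_j(0)=0$, the setting in which the recurrence is stated), it suffices to prove that
\[
R:=\big\{k\in\N:\mu\big(A\cap S^{-g_1(k)}A\cap\dots\cap S^{-g_r(k)}A\big)>0\big\}\in\E^*,
\]
since $R$ is contained in the set appearing in the statement and $\E^*$, being an intersection of ultrafilters, is upward closed.

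Next I would convert $\E^*$-membership into recurrence along idempotents. By construction $\E$ is the union of all essential idempotents $p$ --- idempotents every member of which has positive upper Banach density --- and each $p$ is an ultrafilter, so the ``in particular'' clause of Lemma \ref{LemmaBeDo} yields $\E^*=\bigcap_p p$. Hence it is enough to fix an arbitrary essential idempotent $p$ and prove $R\in p$. Here the relation $p=p+p$ is what makes $p$ usable: it endows every member of $p$ with IP-structure and makes the idempotent limit $p\text{-}\lim_k$ of a bounded sequence compatible with the semigroup operation on $\beta\N$, which is precisely what the differencing arguments below require.

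The core of the proof, and the step I expect to be the main obstacle, is the ergodic IP-polynomial Szemer\'{e}di theorem: for the fixed essential idempotent $p$ one shows
\[
\gamma:=p\text{-}\lim_k\ \mu\big(A\cap S^{-g_1(k)}A\cap\dots\cap S^{-g_r(k)}A\big)>0.
\]
This is obtained by PET-induction on the complexity (the weight and degree data) of the polynomial family $(g_1,\dots,g_r)$, the inductive step being driven by an IP-version of the van der Corput lemma that replaces the family by a differenced family of strictly lower complexity, the differencing being carried out as a $p$-limit. The hypothesis that $p$ is \emph{essential} is used crucially: it guarantees that the positivity inherited from $\overline{Bd}(F)>0$ propagates through the correspondence and that the base case --- ordinary multiple recurrence for a set $A$ of positive measure --- delivers a strictly positive, rather than merely nonnegative, limit. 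Once this positive $p$-limit is in hand, the set $\{k:\mu(A\cap S^{-g_1(k)}A\cap\dots\cap S^{-g_r(k)}A)>\gamma/2\}$ belongs to $p$ by the definition of the idempotent limit and is contained in $R$, so $R\in p$. As $p$ was an arbitrary essential idempotent, $R\in\E^*$, and the theorem follows.
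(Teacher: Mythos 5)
The paper contains no proof of this statement to compare yours against: Theorem \ref{theor.BeMc} is imported verbatim as Theorem 1.25 of \cite{BeMc}, and the paper uses it as a black box. Judged on its own terms, your outer scaffolding is correct and is in fact how Bergelson and McCutcheon frame the result: the Furstenberg correspondence principle (with a single system $(X,\mathcal{B},\mu,S)$ and a single $A$ serving all finite shift patterns simultaneously) reduces the claim to showing $R=\{k\in\N:\mu(A\cap S^{-g_1(k)}A\cap\dots\cap S^{-g_r(k)}A)>0\}\in\E^*$; since $\E^*$ is, by definition and Lemma \ref{LemmaBeDo}, the intersection of all essential idempotents, it suffices to fix one such $p$ and prove $R\in p$; and a positive $p$-limit $\gamma$ of the correlation sequence (which exists for any ultrafilter, the sequence being bounded) would indeed place $\{k:\mu(\cdot)>\gamma/2\}\subseteq R$ in $p$. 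All of these reductions are sound.

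The genuine gap is that the step you yourself label ``the core'' is not a step --- it \emph{is} the theorem. The positivity of $p\text{-}\lim_k\mu(A\cap S^{-g_1(k)}A\cap\dots\cap S^{-g_r(k)}A)$ for every essential idempotent $p$ is the main ergodic theorem of \cite{BeMc}, occupying most of that paper; invoking ``PET-induction'' and ``an IP-version of the van der Corput lemma'' names the machinery without discharging it. In particular, you do not explain why the base case produces \emph{strict} positivity, nor where essentiality of $p$ (positive upper Banach density of every member) actually enters the induction --- this is precisely the delicate part, requiring a van der Corput lemma adapted to $D$-limits rather than ordinary IP-limits, together with the characteristic-factor analysis. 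Separately, your parenthetical ``normalizing so that each $g_j(0)=0$'' conflates a hypothesis with a normalization: the condition cannot be arranged by translation and cannot be dropped. Indeed, for $F$ the even natural numbers and the single polynomial $g(k)=2k+1$, one has $F\cap(F-g(k))=\emptyset$ for every $k$, so the set in the statement is empty and lies in no filter, although $\overline{Bd}(F)=1/2>0$. The statement as quoted in the paper silently omits this vanishing condition (present in the actual Theorem 1.25 of \cite{BeMc}, which is moreover stated for generalized polynomials); you were right to sense it was needed, but it should be flagged as a missing hypothesis of the quoted statement, not absorbed by a spurious normalization.
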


  We can now prove Theorem \ref{generalcase.rhc.implies.d-synd}.

  Fix $r\in \N$. Let $T$ be reiteratively hypercyclic, then there exists $x\in X$ such that $\overline{Bd}\big(N(x, U)\big)>0$, for any non-empty open set $U$ in $X$. First, let us see that 
   \begin{equation}
   \label{Tis.centralG}
   N_T( \underbrace{U, \dots, U}_{r};U)=\{k\geq 0: T^{-k}U\cap\dots \cap T^{-rk}U\cap U\neq \emptyset\}\in \E^*
   \end{equation}
    for any non-empty open set $U$ in $X$. Let $U$ non-empty open set, then 
\begin{align*}
   A_U:=\big\{k\geq 0: \overline{Bd}\Big(N(x, U)\cap \big(N(x, U)-k\big)\cap\dots \cap \big(N(x, U)-rk\big)\Big)>0\big\}\\
   \subseteq \big\{k\geq 0: T^{-k}U\cap \dots \cap T^{-rk}U\cap U \neq \emptyset\big\}.
 \end{align*}
In fact, let $k\in A_U$, then there exists a set $A$ with positive upper Banach density such that for any $n\in A$ it holds $T^{n+ik}x\in U$, for any $i\in\{0,\dots, r\}$. Consequently, $T^nx\in T^{-k}U\cap \dots \cap T^{-rk}U\cap U$. Now, by Theorem \ref{theor.BeMc}, it follows that $A_U\in \E^*$. Thus condition (\ref{Tis.centralG}) holds.

 Next, let $(U_j)_{j=0}^r$ be a finite sequence of non-empty open sets in $X$. Now, suppose that $(T, \dots, T^r)$ is $d$-transitive, we must show that $N_T(U_1, \dots, U_r; U_0)$ is a syndetic set. In fact, there exists $n\in \N$ such that 
 \[
 V_n:=T^{-n}U_1\cap\dots \cap T^{-rn}U_r\cap U_0\neq \emptyset.
 \]
 Thus $V_n$ is open, then pick $O_1, O_2$ non-empty open sets such that $O_1+O_2\subset V_n$, then 
 \begin{equation}
 \label{incl.O1.O2}
 T^{jn}(O_1+O_2)\subset U_j, \qquad \mbox{for any $j\in \{0, \dots, r\}$}.
 \end{equation}
 It is known that $\E^*$ is a filter. Now, by (\ref{Tis.centralG}) we have
 \[
 A:=N_T( \underbrace{O_1, \dots, O_1}_{r};O_1) \cap N_T( \underbrace{O_2, \dots, O_2}_{r};O_2)\in \E^*.
 \]
 In addition, it is well known that  each set in $\E^*$ is indeed syndetic \cite{BeDo}. Hence, $A$ is syndetic. Let us show that $A+n\subseteq N(U_1, \dots, U_r; U_0)$, then we are done because $A+n$ is syndetic, since the collection of syndetic sets is shift invariant.
 
 In fact, let $t\in A+n$, then $t-n\in A$, which means 
   \begin{align*}
   T^{-t}T^n(O_1)\cap\dots \cap T^{-rt}T^{rn}(O_1)\cap O_1\neq \emptyset\\
    T^{-t}T^n(O_2)\cap\dots \cap T^{-rt}T^{rn}(O_2)\cap O_2\neq \emptyset.
   \end{align*}
 By the linearity of $T$ we obtain
 \[
  T^{-t}\big(T^n(O_1+O_2)\big)\cap\dots \cap T^{-rt}\big(T^{rn}(O_1+O_2)\big)\cap (O_1+O_2)\neq \emptyset.
 \]
 Then we conclude by (\ref{incl.O1.O2}), i.e.
 \[
 T^{-t}U_1\cap\dots \cap T^{-rt}U_r\cap U_0\neq \emptyset.
\]
This concludes the proof of Theorem \ref{generalcase.rhc.implies.d-synd}.

\section{Tuple of powers of a weighted shift}
 In linear dynamics recurrence properties are frequently studied first in the context of weighted backward shifts. 

 Each bilateral bounded weight $w=(w_k)_{k\in \Z}$, induces a \emph{bilateral weighted backward shift} $B_w$ on $X=c_0(\Z)$ or $l^p(\Z) (1\leq p<\infty)$, given by $B_{w}e_k:=w_{k}e_{k-1}$, where $(e_k)_{k\in \Z}$ denotes the canonical basis of $X$. 
 
 Analogously,  each unilateral bounded weight $w=(w_n)_{n\in \Z_+}$ induces a \emph{unilateral weighted backward shift} $B_w$ on $X=c_0(\Z_+)$ or $l^p(\Z_+) (1\leq p<\infty)$, given by $B_{w}e_n:=w_{n}e_{n-1}, n\geq 1$ with $B_{w}e_0:=0$, where $(e_n)_{n\in \Z_+}$ denotes the canonical basis of $X$. 

 As previously mentioned, the authors of \cite{BMPS} proved that for any weighted shift $B_w$, the following holds: $B_w$ is mixing if and only if $(B_w, \dots, B_w^r)$ is $d$-mixing for any $r\in \N$. The aim of this section is to show that this result extends to those families on $\N$ frequently studied in Ramsey theory.
 
 Let us summarize some families commonly used in Ramsey theory. 
     \begin{itemize}
\item $\I=\{A \subseteq \N: A \mbox{ is infinite}\}$;
\item $\Delta=\{A\subseteq \N: B-B\subseteq A, \mbox{for some infinite set }B\}$;
\item $\IP=\{A\subseteq \N: \exists (x_n)_n\subseteq \N, \sum_{n\in F}x_n\in A, \mbox{for any finite set }F\}$;
\item The set $A$ is \emph{piecewise syndetic} ($A\in \PS$ for short) if $A$ can be written as the intersection of a thick and a syndetic set.
\end{itemize}
  It is known that  $\I^*$(family of cofinite sets), $\Delta^*, \IP^*$ and $\PS^*$ are filters. In addition, $\I^*\subsetneqq \Delta^* \subsetneqq \IP^* \subsetneqq \Synd$ and $\I^*\subsetneqq \PS^* \subsetneqq \Synd$, where $\Synd$ denote the family of syndetic sets. For a rich source on this subject we refer the reader to \cite{HiSt}.  
 
 The main result of this section is the following.
 
\begin{theorem}
\label{tupleofpower.filter.bilateral}
 Let $\F$ be the family $\Delta^*, \IP^*, \PS^*$ or $\Synd$ then for any $r\in \N$ the following are equivalent:

(i) $T$ is an $\F$-operator;

(ii) $T\oplus\dots\oplus T^r$ is an $\F$-operator on $X^r$.

 In particular, a bilateral (unilateral) weighted backward shift $B_w$ on $c_0$ or $l^p (1\leq p< \infty)$ is an $\F$-operator if and only if $(B_w, \dots, B_w^r)$ is d-$\F$. 
\end{theorem}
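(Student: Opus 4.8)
The plan is to translate condition (ii) into a statement about the single return sets $N_T(U,V)$ and then read off how each family $\F$ behaves under dilations of $\N$. Write $S=T\oplus\cdots\oplus T^r$ on $X^r$. Since every family is stable under supersets and the boxes $U_1\times\cdots\times U_r$ form a basis of $X^r$, it suffices to test the $\F$-operator property of $S$ on such boxes, and because the coordinates evolve independently with $(T^i)^n=T^{in}$ a direct computation gives
\[
N_S(U_1\times\cdots\times U_r,\,V_1\times\cdots\times V_r)=\bigcap_{i=1}^r N_{T^i}(U_i,V_i)=\bigcap_{i=1}^r\{n\ge 0: in\in N_T(U_i,V_i)\}.
\]
The implication (ii)$\Rightarrow$(i) is then immediate: choosing $U_i=V_i=X$ for $2\le i\le r$ makes the factors with $i\ge2$ equal to all of $\N$ (the origin is a common fixed point), so the intersection collapses to $N_T(U_1,V_1)$, which therefore lies in $\F$.

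For (i)$\Rightarrow$(ii) in the three filter cases $\F\in\{\Delta^*,\IP^*,\PS^*\}$ I would isolate the fact that the predual families are \emph{dilation invariant}: for $\G\in\{\Delta,\IP,\PS\}$ and $i\in\N$, if $G\in\G$ then $iG\in\G$, since $i(B-B)=(iB)-(iB)$, $i\cdot FS((x_n))=FS((ix_n))$, and dilating a piecewise syndetic set by $i$ preserves having bounded gaps on arbitrarily long intervals. Dualising this yields exactly the operation the displayed intersection needs: if $A\in\G^*$ then $\{n: in\in A\}\in\G^*$, because for every $G\in\G$ one has $iG\in\G$ and hence $A\cap iG\neq\emptyset$, i.e. some $n\in G$ satisfies $in\in A$. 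Applying this with $A=N_T(U_i,V_i)\in\F$ shows that each of the $r$ factors lies in $\F$, and since $\F=\G^*$ is a filter their finite intersection is again in $\F$. This settles the three dual filters.

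The case $\F=\Synd$ is where I expect the real difficulty, and for a structural reason: $\Synd$ is not a filter and its predual $\T$ of thick sets is not dilation invariant (dilating an interval by $i\ge2$ destroys it), so both halves of the previous scheme break down. The dilation identity still reduces the condition ``$\{n:in\in N_T(U,V)\}$ meets every thick set'' to ``$N_{T^i}(U,V)$ is syndetic'', i.e. to an Ansari-type statement that $\Synd$-transitivity is inherited by the powers $T^i$, which can be recovered along the lines of the fact that $T$ hypercyclic forces $T^i$ hypercyclic. The genuine obstacle is the final step: each factor $N_{T^i}(U_i,V_i)$ is now syndetic, but a finite intersection of syndetic sets need not be syndetic, so I must use that all factors are return sets of the \emph{same} operator $T$ in order to align them. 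Here the plan is to exploit the additive structure $N_T(U,W)+N_T(W,V)\subseteq N_T(U,V)$ together with the shift invariance of $\Synd$ to extract a single syndetic set sitting inside every factor; this alignment is the crux, and is what prevents treating $\Synd$ on the same purely formal footing as the dual filters.

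Finally, for the ``in particular'' clause I would specialise to a weighted backward shift $B_w$ and show that $d$-$\F$ of $(B_w,\dots,B_w^r)$ and the $\F$-operator property of $B_w\oplus\cdots\oplus B_w^r$ describe, up to a finite modification, the same set of exponents $k$. Working with finitely supported vectors and the canonical basis, the disjointness requirement $U_0\cap B_w^{-k}(U_1)\cap\cdots\cap B_w^{-rk}(U_r)\neq\emptyset$ can be met, for all large $k$, by a single vector assembled from pieces supported on far-apart blocks, one block per coordinate, because $B_w^{ik}$ translates supports by $ik$ and the blocks cease to interact; this decouples the common constraint at $U_0$ into exactly the $r$ separate constraints defining $N_{B_w\oplus\cdots\oplus B_w^r}$. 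Since all the families considered are shift invariant and contain the cofinite sets, the bounded discrepancy between the two return sets is irrelevant to membership in $\F$, so the already established equivalence (i)$\Leftrightarrow$(ii) then yields the weighted-shift statement.
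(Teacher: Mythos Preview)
Your treatment of the three dual-filter cases $\Delta^*,\IP^*,\PS^*$ is correct and in fact more self-contained than the paper's. The paper quotes black-box results of Bergelson--Hindman (Propositions~\ref{corol2.3BeHi} and~\ref{corol2.7BeHi}) asserting that $B^r\cap I$ is $\F$ in the diagonal subsemigroup $I=\{(m,2m,\dots,rm)\}$ whenever $B\in\F$ in $\N$, and then projects to the first coordinate; you instead prove directly that $A\in\G^*$ implies $\{n:in\in A\}\in\G^*$ via dilation invariance of $\G\in\{\Delta,\IP,\PS\}$. These are the same mechanism, and your version has the merit of making explicit exactly which combinatorial fact is being used.

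The genuine gap is the $\Synd$ case. You correctly diagnose why your scheme breaks down (thick is not dilation invariant, syndetic is not a filter), and you reduce the problem to showing that $\bigcap_{i=1}^r N_{T^i}(U_i,V_i)$ is syndetic, but your proposed ``alignment'' via $N_T(U,W)+N_T(W,V)\subseteq N_T(U,V)$ is not an argument: that inclusion links return sets of the \emph{same} power through a common intermediate set $W$, whereas here you must intersect return sets of \emph{different} powers $T,\dots,T^r$ through unrelated open sets $U_i,V_i$, and no amount of shifting produces a common syndetic subset from that inclusion alone. The paper bypasses this entirely by invoking a result from \cite{BMPP}: an operator is $\Synd$-transitive if and only if it is $\PS^*$-transitive. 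Granting this, $T$ $\Synd$-operator $\Leftrightarrow$ $T$ $\PS^*$-operator $\Leftrightarrow$ $T\oplus\cdots\oplus T^r$ $\PS^*$-operator (by the filter case already done) $\Leftrightarrow$ $T\oplus\cdots\oplus T^r$ $\Synd$-operator, and the weighted-shift addendum follows likewise. This equivalence is the missing idea; without it (or an independent proof that syndetic transitivity passes to $T\oplus\cdots\oplus T^r$), your outline does not close.

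For the weighted-shift clause, your decoupling sketch is plausible but the paper takes a cleaner route: it cites Propositions~\ref{cor.charact.bilateral}--\ref{cor.charact.unilateral} (coming from \cite{BePe,Sa}), which state for any filter $\F$ that $(B_w^{r_1},\dots,B_w^{r_N})$ is $d$-$\F$ iff $\bigoplus_{s<l}B_w^{r_l-r_s}$ is an $\F$-operator, so the ``in particular'' reduces immediately to the main equivalence.
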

 \begin{remark}
 Obviously, mixing operators are $\Delta^*$-operators, but the converse is not true as exhibited in \cite{BMPP} and the example is a weighted shift. Therefore, the conclusion of Theorem \ref{tupleofpower.filter.bilateral} concerning weighted shifts does not necessarily follows from the statement: $B_w$ is mixing if and only if $(B_w, \dots, B_w^r)$ is $d$-mixing, for any $r\in \N$, shown in \cite{BMPS}.
  \end{remark}
  
In order to prove Theorem \ref{tupleofpower.filter.bilateral} we will need the following results.
 
 Recall that any tuple of powers of a fixed backward weighted shift on $c_0$ or $l^p$ is $d$-transitive if and only if it is $d$-hypercyclic. This follows by Theorem 2.7 \cite{BePe} and Theorem 4.1 \cite{BePe}. Now, combining Theorem 4.1 \cite{BePe} and Theorem 2.5 \cite{Sa} in its bilateral (unilateral) version, we obtain the following two propositions below.
\begin{proposition}
\label{cor.charact.bilateral}
Let $X=c_0(\Z)$ or $l^p(\Z) (1\leq p < \infty), w=(w_j)_{j\in \Z}$ a bounded bilateral weight sequence, $\F$ a filter on $\N$ and $r_0=0<1\leq r_1<\dots < r_N$, then the following are equivalent: 

(i) $(B_w^{r_1}, \dots, B_w^{r_N})$ is d-$\F$,

(ii) $\oplus_{0\leq s < l \leq N}B_w^{(r_l-r_s)}$ is $\F$-operator on $X^{\frac{N(N+1)}{2}}$,

(iii) for any $M>0, j\in \Z$ and $0\leq s<l\leq N$ it holds
\[
\Big\{m\in \N: \prod_{i=j+1}^{j+m(r_l-r_s)}\abs{w_i}>M\Big\}\in \F,
\]
\[
\Big\{m\in \N: \frac{1}{\prod_{i=j-m(r_l-r_s)+1}^j\abs{w_i}}>M\Big\}\in \F.
\]
\end{proposition}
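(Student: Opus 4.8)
The plan is to prove the two equivalences $(i)\Leftrightarrow(ii)$ and $(ii)\Leftrightarrow(iii)$ separately. The link $(i)\Leftrightarrow(ii)$ will come from Theorem 4.1 \cite{BePe}, the link $(ii)\Leftrightarrow(iii)$ from Theorem 2.5 \cite{Sa}, and the two will be glued together by one elementary observation about direct sums that uses crucially that $\F$ is a filter.

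First I would record that observation. Since $\F$ is a filter, a finite direct sum $S=\bigoplus_{\alpha}S_\alpha$ is an $\F$-operator if and only if every summand $S_\alpha$ is an $\F$-operator. Indeed, the products $\prod_\alpha U_\alpha$ form a basis of the topology of the direct sum space and satisfy the return-set identity
\[
N\Big(\prod_\alpha U_\alpha,\ \prod_\alpha V_\alpha\Big)=\bigcap_\alpha N(U_\alpha,V_\alpha).
\]
If each $N(U_\alpha,V_\alpha)\in\F$, then their finite intersection lies in $\F$ because $\F$ is closed under finite intersections; conversely, choosing $V_\beta=X$ for all $\beta\neq\alpha_0$ forces $N(U_\beta,V_\beta)=\N$, so that $N(\prod U_\alpha,\prod V_\alpha)=N(U_{\alpha_0},V_{\alpha_0})$, whence $N(U_{\alpha_0},V_{\alpha_0})\in\F$ by upward closure of $\F$. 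Applying this to $S=\bigoplus_{0\le s<l\le N}B_w^{r_l-r_s}$ reduces $(ii)$ to the assertion that each single power $B_w^{r_l-r_s}$ is an $\F$-operator.

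Next I would settle $(ii)\Leftrightarrow(iii)$ by feeding each summand into Theorem 2.5 \cite{Sa}. Writing $d=r_l-r_s\ge 1$, the $m$-th iterate of $B_w^{d}$ is $B_w^{md}$, which acts by $B_w^{md}e_k=\big(\prod_{i=k-md+1}^{k}w_i\big)e_{k-md}$, so the weight products controlling its dynamics run over blocks of $md$ consecutive entries of $w$. The bilateral version of Theorem 2.5 \cite{Sa} then characterizes $\F$-transitivity of $B_w^{d}$ by requiring, for every $M>0$ and every $j\in\Z$, that the set of $m$ with $\prod_{i=j+1}^{j+md}\abs{w_i}>M$ belong to $\F$, and that the set of $m$ for which the reciprocal of $\prod_{i=j-md+1}^{j}\abs{w_i}$ exceeds $M$ do the same. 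With $d=r_l-r_s$ these are exactly the two displayed conditions of $(iii)$, and the bookkeeping of indices is routine.

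Finally, the heart of the matter is $(i)\Leftrightarrow(ii)$, which I would obtain from Theorem 4.1 \cite{BePe}. Putting $r_0=0$, the disjoint return set
\[
\big\{k\ge 0:\ B_w^{-kr_1}(U_1)\cap\dots\cap B_w^{-kr_N}(U_N)\cap U_0\neq\emptyset\big\}
\]
has to be matched with the return set of $\bigoplus_{0\le s<l\le N}B_w^{r_l-r_s}$ on $X^{N(N+1)/2}$; the $N(N+1)/2=\binom{N+1}{2}$ summands correspond bijectively to the pairs $0\le s<l\le N$, which is the combinatorial source of both the index set of the direct sum and the appearance of the pairwise differences $r_l-r_s$. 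Theorem 4.1 \cite{BePe} supplies precisely this identification for tuples of powers of a single backward shift, and since basic open sets are products and $\F$ is upward closed, membership of the two return sets in $\F$ is equivalent. The main obstacle I anticipate lies exactly here: Theorem 4.1 \cite{BePe} is phrased for disjoint transitivity (equivalently, as recalled just before the statement, disjoint hypercyclicity) rather than at the level of an arbitrary family, so I must check that the identification of return sets it provides is an identification of the actual sets, or at worst up to a modification absorbed by the filter, so that the passage to membership in $\F$ is legitimate in both directions. Once this is verified, the proposition follows by assembling the three links.
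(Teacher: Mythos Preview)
Your proposal is correct and follows exactly the route the paper indicates: the paper does not give an independent argument but simply states that the proposition is obtained by ``combining Theorem 4.1 \cite{BePe} and Theorem 2.5 \cite{Sa} in its bilateral version,'' which is precisely your decomposition $(i)\Leftrightarrow(ii)$ via \cite{BePe} and $(ii)\Leftrightarrow(iii)$ via \cite{Sa}, together with the filter observation that a finite direct sum is an $\F$-operator if and only if each summand is. The concern you flag---that the cited theorems are stated for transitivity rather than for an arbitrary filter $\F$---is exactly the point one must check, and it is resolved because the proofs of those theorems identify the relevant return sets (up to finite intersections) with the weight-product sets appearing in (iii), so membership in $\F$ transfers in both directions; the paper leaves this implicit.
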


\begin{proposition}
\label{cor.charact.unilateral}
Let $X=c_0(\Z_+)$ or $l^p(\Z_+) (1\leq p < \infty),w=(w_n)_{n\in \Z_+}$ a bounded unilateral weight sequence, $\F$ a filter on $\N$ and $r_0=0<1\leq r_1<\dots < r_N$, then the following are equivalent: 

(i) $(B_w^{r_1}, \dots, B_w^{r_N})$ is d-$\F$,

(ii) $\oplus_{0\leq s < l \leq N}B_w^{(r_l-r_s)}$ is $\F$-operator on $X^{\frac{N(N+1)}{2}}$,

(iii) for any $M>0, j\in \Z_+$ and $0\leq s<l\leq N$ it holds
\[
\Big\{m\in \N: \prod_{i=j+1}^{j+m(r_l-r_s)}\abs{w_i}>M\Big\}\in \F.
\]

\end{proposition}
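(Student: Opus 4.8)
The plan is to establish the two equivalences (i)$\Leftrightarrow$(ii) and (ii)$\Leftrightarrow$(iii) separately, mirroring the bilateral Proposition \ref{cor.charact.bilateral} but replacing each ingredient by its unilateral counterpart. The implication (i)$\Leftrightarrow$(ii) is a purely abstract reduction that makes no use of the shift structure: I would apply Theorem 4.1 \cite{BePe}, which rewrites the d-$\F$ transitivity of a tuple of powers $(B_w^{r_1},\dots,B_w^{r_N})$ (with $r_0=0$) as the $\F$-transitivity of a single direct sum operator. Concretely, after passing to the differences $r_l-r_s$, the return set $\big\{m: \bigcap_l B_w^{-r_l m}(U_l)\cap U_0\neq\emptyset\big\}$ occurring in the definition of d-$\F$ coincides, setwise, with the joint return set of $\oplus_{0\le s<l\le N}B_w^{r_l-r_s}$. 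Since this is an \emph{identity} of subsets of $\N$, membership in the prescribed filter $\F$ is preserved in both directions, and the equivalence holds verbatim as in the bilateral case.

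For (ii)$\Leftrightarrow$(iii) I would first reduce the direct sum to its components. Because $\F$ is a filter, that is, closed under supersets and under finite intersections and containing $\N$, the operator $\oplus_{0\le s<l\le N}B_w^{r_l-r_s}$ is an $\F$-operator if and only if each summand $B_w^{r_l-r_s}$ is: the joint return set over a box of open sets is the finite intersection of the component return sets, so superset-closure gives one direction (each component return set contains the intersection) and intersection-closure gives the other. Then I would apply Theorem 2.5 \cite{Sa} in its unilateral version to each $B_w^q$ with $q=r_l-r_s$, which characterizes $\F$-transitivity of a unilateral weighted backward shift by the condition that $\big\{m: \prod_{i=j+1}^{j+mq}\abs{w_i}>M\big\}\in\F$ for every $M>0$ and every $j\in\Z_+$.

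The Salas-type mechanism behind that characterization is the technical core, though it is routine. To carry a finitely supported $x$ into an $\eps$-ball of a finitely supported $y$ under $B_w^{qm}$, one perturbs $x$ by a far-out vector $u$ with $B_w^{qm}u=y$, namely $u=\sum_n \frac{y_n}{\prod_{i=n+1}^{n+qm}w_i}\,e_{n+qm}$; its norm is governed by the reciprocal products $1/\prod_{i=n+1}^{n+qm}\abs{w_i}$, so $\norm{u}$ is small exactly when these products are large, while $B_w^{qm}x=0$ once $qm$ exceeds the support of $x$, since the unilateral backward shift annihilates finitely supported vectors after enough iterations. Letting the support index $n$ vary and writing $j=n$ produces the window $[j+1,j+mq]$ and forces the condition for every $j\in\Z_+$; the filter property of $\F$ then recombines the finitely many pairs $(s,l)$.

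The one genuinely new point relative to the bilateral Proposition \ref{cor.charact.bilateral} is that the unilateral characterization in (iii) retains only the single growth inequality and drops the second, reciprocal-product, condition. This reflects the non-invertibility of $B_w$: there is no left-hand tail to control, and the image of any finitely supported vector vanishes after finitely many iterations, so the sole obstruction to transitivity is the ability to make the preimage vector $u$ small, which is precisely the growth condition. Verifying that this single condition is both necessary and sufficient, that is, that the unilateral specialization of Theorem 2.5 \cite{Sa} legitimately omits the decay inequality present in the bilateral statement, is the step I expect to require the most care; the remainder of the argument is a transcription of the bilateral proof.
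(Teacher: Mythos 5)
There is a genuine gap in your first step. You claim that (i)$\Leftrightarrow$(ii) is ``a purely abstract reduction that makes no use of the shift structure,'' because the return set $\big\{m: B_w^{-r_1m}(U_1)\cap\dots\cap B_w^{-r_Nm}(U_N)\cap U_0\neq\emptyset\big\}$ allegedly coincides setwise with the return set of $\oplus_{0\leq s<l\leq N}B_w^{r_l-r_s}$. No such identity holds for a general operator, and this paper itself supplies the counterexample: for the mixing operator $T=Q_K$ of Theorem \ref{op.mixing.tuple.not.d-S}, the direct sum $T\oplus T^2\oplus T$ (the case $N=2$, $r_1=1$, $r_2=2$, with differences $1,2,1$) is mixing --- powers and finite direct sums of mixing operators are mixing --- hence an $\F$-operator for the cofinite filter $\F=\I^*$, while $(T,T^2)$ is not even d-transitive, let alone d-$\I^*$. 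So (ii) does not imply (i) abstractly, even when $\F$ is a filter; the equivalence is a theorem \emph{about weighted shifts}, and the shift structure must enter. The route the paper takes, citing Theorem 4.1 \cite{BePe} together with Theorem 2.5 \cite{Sa}, is to prove (i)$\Leftrightarrow$(iii) directly by the B\`es--Peris computation: to push a finitely supported $x^{(0)}$ simultaneously into neighborhoods of $x^{(1)},\dots,x^{(N)}$ under $B_w^{r_1m},\dots,B_w^{r_Nm}$, one adds correction vectors supported near the coordinates $r_lm$, and applying $B_w^{r_lm}$ to the correction aimed at $x^{(j)}$ with $j>l$ deposits an error near coordinate $(r_j-r_l)m$ whose size is governed by the weight products over windows of length $m(r_j-r_l)$. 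These cross-terms are exactly what generates the pairwise-difference conditions in (iii), and they are invisible in your proposed setwise identification of return sets.

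Your second step, (ii)$\Leftrightarrow$(iii), is essentially right and matches the intended argument: since $\F$ is a filter, the direct sum is an $\F$-operator if and only if each of the $N(N+1)/2$ summands is (boxes are cofinal among open sets of the product, the return set of a box is the finite intersection of the component return sets, and $\F$ is closed upward and under finite intersections), and the unilateral version of Theorem 2.5 \cite{Sa} applies to each $B_w^{r_l-r_s}$. Your explanation of why the reciprocal-product condition of Proposition \ref{cor.charact.bilateral} disappears --- $B_w^{qm}$ annihilates finitely supported vectors once $qm$ exceeds the support, so only the far-out correction $u$ needs to be small --- is correct. But as written, your proof establishes only (ii)$\Leftrightarrow$(iii); the bridge to (i) is missing, and repairing it requires redoing the weight computation for the tuple (the $\F$-version of Theorem 4.1 \cite{BePe}), not an abstract identification of return sets.
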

The following results of Ramsey theory concern the preservation of certain notions of largeness in products.
 \begin{proposition} Corollary 2.3 \cite{BeHi}
   \label{corol2.3BeHi}
   Let $l\in \N$ and $I$ be a subsemigroup of $\N^l$,
   
   a) if $B$ is an $IP^*$ set in $\N$, then $B^l\cap I$ is an $IP^*$ set in $I$
   
   b) if $B$ is an $\Delta^*$ set in $\N$, then $B^l\cap I$ is an $\Delta^*$ set in $I$.
   \end{proposition}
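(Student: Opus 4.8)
The plan is to prove both parts by the same device: the $l$ coordinate projections $\pi_j\colon I\to\N$ (for $1\le j\le l$) are semigroup homomorphisms, since $I$ is a subsemigroup of $(\N^l,+)$, and one transports the largeness of $B$ through each $\pi_j$ and then intersects over $j$. The point is that $\pi_j^{-1}(B)=\{\vec x\in I: x_j\in B\}$, so that $B^l\cap I=\bigcap_{j=1}^l\pi_j^{-1}(B)$; hence any filter-type membership that holds for each $\pi_j^{-1}(B)$ will pass to the finite intersection $B^l\cap I$.

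For part a) I would use the standard ultrafilter description of $\IP^*$: for any discrete (here commutative) semigroup $S$ the $\IP$ sets of $S$ are precisely the sets belonging to some idempotent of $\beta S$, whence $\IP^*(S)=\bigcap_{p\in E(S)}p$, where $E(S)=\{p\in\beta S:p=p+p\}$. Each $\pi_j$ extends to a continuous homomorphism $\tilde\pi_j\colon\beta I\to\beta\N$, and continuous homomorphisms carry idempotents to idempotents, since $\tilde\pi_j(p)=\tilde\pi_j(p+p)=\tilde\pi_j(p)+\tilde\pi_j(p)$. Thus for every $p\in E(I)$ the image $\tilde\pi_j(p)$ lies in $E(\N)$; because $B$ is $\IP^*$ in $\N$ we get $B\in\tilde\pi_j(p)$, i.e. $\pi_j^{-1}(B)\in p$, for each $j$. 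As $p$ is a filter, $B^l\cap I=\bigcap_{j=1}^l\pi_j^{-1}(B)\in p$. This holds for every idempotent $p$ of $\beta I$, so $B^l\cap I\in\IP^*(I)$. (A purely combinatorial variant replaces idempotents by Hindman's finite sums theorem: color each element of a given finite-sums system in $I$ by the set of coordinates whose coordinate-sum lands in $B$, extract a monochromatic sub-system, and observe that a color omitting some $j$ would yield a finite-sums set in $\N$ disjoint from $B$, contradicting that $B$ is $\IP^*$.)

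For part b) the idempotent description is less convenient, so I would argue combinatorially with Ramsey's theorem. Fix a $\Delta$ set $D$ in $I$, witnessed by an injective sequence $(\vec a_n)_n$ in $I$ with $\vec a_n-\vec a_m\in D$ for all $m<n$; I must produce one such difference lying in $B^l\cap I$. Because these differences lie in $I\subseteq\N^l$, each coordinate sequence $(a_n^{(j)})_n$ is strictly increasing, so $\{a_n^{(j)}:n\in T\}$ is infinite for every infinite index set $T$ and every $j$. Color each pair $m<n$ by the subset $\{\,j: a_n^{(j)}-a_m^{(j)}\in B\,\}$ of $\{1,\dots,l\}$; there are finitely many colors, so Ramsey's theorem yields an infinite $T$ on which the color is constant, equal to some $C\subseteq\{1,\dots,l\}$. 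If some $j_0\notin C$, then no pair in $T$ has its $j_0$-coordinate difference in $B$, i.e. the infinite set $\{a_n^{(j_0)}:n\in T\}$ has no two elements whose difference is in $B$, contradicting that $B$ is $\Delta^*$ in $\N$. Hence $C=\{1,\dots,l\}$, and any pair $m<n$ in $T$ gives $\vec a_n-\vec a_m\in B^l$; since this vector also lies in $D\subseteq I$, it witnesses $(B^l\cap I)\cap D\neq\emptyset$. As $D$ was arbitrary, $B^l\cap I\in\Delta^*(I)$.

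I expect the main obstacle to be the bookkeeping about what ``$\IP$ set'' and ``$\Delta$ set'' mean inside the subsemigroup $I$ rather than in $\N$, and checking that the projections send genuine such sets to genuine ones. For $\IP$ this is automatic, because finite sums of elements of $I$ stay in $I$ and project to honest finite-sums sets in $\N$. For $\Delta$ the delicate point is that the differences realizing the set must themselves lie in $I$; this is exactly what forces each coordinate sequence to be strictly increasing, so that the projected sets are infinite and the one-dimensional $\Delta^*$ property applies, and it is where the hypothesis $I\subseteq\N^l$ (equivalently, positivity of all coordinate differences) is used. Once these points are settled, the filter property of $p$ in part a) and Ramsey's theorem in part b) complete the argument.
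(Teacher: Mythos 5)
Your proposal cannot be checked against an in-paper argument, because the paper quotes Proposition \ref{corol2.3BeHi} verbatim from Corollary 2.3 of \cite{BeHi} and uses it as a black box, giving no proof at all; judged on its own merits, your reconstruction is correct. Part a) is essentially the standard ultrafilter argument in the spirit of the original: the coordinate projections $\pi_j\colon I\to\N$ are semigroup homomorphisms, their continuous extensions $\tilde\pi_j\colon\beta I\to\beta\N$ are again homomorphisms (Corollary 4.22 of \cite{HiSt}), hence carry idempotents to idempotents, and since $(\N,+)$ with $\N=\{1,2,\dots\}$ has no principal idempotents, $B\in\tilde\pi_j(p)$ for every idempotent $p$ of $\beta I$, so the filter property of $p$ yields $B^l\cap I=\bigcap_{j=1}^l\pi_j^{-1}(B)\in p$; your only blemish is notational, writing $E(S)$ where you mean the idempotents of $\beta S$. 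Part b) is a genuinely more elementary route than the ultrafilter machinery: Ramsey's theorem with $2^l$ colours, and its soundness hinges on exactly the bookkeeping you flag yourself, namely that a $\Delta$ set of $I$ must be witnessed by a sequence \emph{all} of whose pairwise differences lie in $I$ (equivalently, a partial-sum sequence of elements of $I$ --- note that in $\N$ this recovers the paper's definition, since strictly increasing sequences are precisely partial-sum sequences of positive gaps). Granting that definition, the positivity of all coordinates of elements of $I\subseteq\N^l$ forces each coordinate sequence to be strictly increasing, so a colour class omitting some $j_0$ really does produce an infinite subset of $\N$ whose difference set avoids $B$, contradicting $B\in\Delta^*$; the full colour class then puts a difference in $D\cap B^l\cap I$, as required. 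Both parts are complete and cover exactly the use the paper makes of the proposition, namely the diagonal subsemigroup $I=\{(m,2m,\dots,rm):m\in\N\}$ in the proof of Theorem \ref{tupleofpower.filter.bilateral}.
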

   
    \begin{proposition} Corollary 2.7 \cite{BeHi}
    \label{corol2.7BeHi}
   Let $l\in \N$ and $I$ be a subsemigroup of $\N^l$,
   
   a) if $B$ is an $PS^*$ set in $\N$, then $B^l\cap I$ is an $PS^*$ set in $I$.
   \end{proposition}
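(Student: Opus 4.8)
The plan is to argue in the Stone-\v{C}ech compactification, using the standard description of piecewise syndetic sets through the smallest two-sided ideal $K(\beta S)$ of $\beta S$. Recall (see \cite{HiSt}) that for a discrete semigroup $S$ a set $A\subseteq S$ is piecewise syndetic if and only if $\operatorname{cl}A\cap K(\beta S)\neq\emptyset$; dualizing, $A$ is $\PS^*$ in $S$ if and only if $A\in p$ for every $p\in K(\beta S)$, i.e. $K(\beta S)\subseteq\operatorname{cl}A$. I would verify this condition for $A=B^l\cap I$ and $S=I$. Writing $\pi_j\colon\N^l\to\N$ for the $j$-th coordinate projection (a semigroup homomorphism) and $\pi_j|_I\colon I\to\N$ for its restriction, one has $B^l\cap I=\bigcap_{j=1}^l\big(I\cap\pi_j^{-1}(B)\big)$, so since $p$ is an ultrafilter it suffices to prove $I\cap\pi_j^{-1}(B)\in p$ for each fixed $j$ and each $p\in K(\beta I)$.

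Next I would fix $j$ and set $S_j:=\pi_j(I)$, a nonempty subsemigroup of $(\N,+)$. The key point is that $S_j$ is \emph{syndetic} in $\N$: if $d=\gcd S_j$ then, by the elementary theory of numerical semigroups, $S_j$ is cofinite in $d\N$ and hence has bounded gaps. In particular $S_j$ is piecewise syndetic, so $\operatorname{cl}S_j\cap K(\beta\N)\neq\emptyset$. Since $\operatorname{cl}S_j=\beta S_j$ is a closed subsemigroup of $\beta\N$ meeting $K(\beta\N)$, the standard lemma on closed subsemigroups \cite{HiSt} gives $K(\beta S_j)=\operatorname{cl}S_j\cap K(\beta\N)\subseteq K(\beta\N)$. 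Now $B$ is $\PS^*$ in $\N$, i.e. $K(\beta\N)\subseteq\operatorname{cl}B$, whence $K(\beta S_j)\subseteq\operatorname{cl}B\cap\operatorname{cl}S_j=\operatorname{cl}(B\cap S_j)$; that is, $B\cap S_j$ is $\PS^*$ in $S_j$.

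It then remains to transport this back to $I$. The restriction $\pi_j|_I\colon I\twoheadrightarrow S_j$ is a surjective homomorphism, so its continuous extension $\widetilde{\pi_j}\colon\beta I\to\beta S_j$ is a surjective continuous homomorphism and therefore sends the smallest ideal into the smallest ideal, $\widetilde{\pi_j}\big[K(\beta I)\big]\subseteq K(\beta S_j)$ \cite{HiSt}. Hence for $p\in K(\beta I)$ we have $\widetilde{\pi_j}(p)\in K(\beta S_j)$, and by the previous paragraph $B\cap S_j\in\widetilde{\pi_j}(p)$; by definition of the pushforward ultrafilter this means $(\pi_j|_I)^{-1}(B\cap S_j)=I\cap\pi_j^{-1}(B)\in p$, as required. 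Intersecting over $j=1,\dots,l$ yields $B^l\cap I\in p$ for every $p\in K(\beta I)$, so $B^l\cap I$ is $\PS^*$ in $I$.

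The main obstacle, and the reason one cannot simply compare $K(\beta I)$ with $K(\beta\N^l)$ directly, is that a subsemigroup $I\subseteq\N^l$ need not be piecewise syndetic in $\N^l$ (a cyclic $I=\{mv:m\in\N\}$ is not, for $l\ge2$), so $\operatorname{cl}I$ may miss $K(\beta\N^l)$ entirely and the closed-subsemigroup lemma does not apply there. The device that circumvents this is to project to each coordinate first: the projections $S_j=\pi_j(I)$ are always syndetic in $\N$, so their closures do meet $K(\beta\N)$, and the surjectivity of $\pi_j|_I$ is exactly what lets the smallest ideal of $\beta I$ be pushed into $K(\beta S_j)$. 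Verifying the two black-box inputs from \cite{HiSt} in the precise forms used, namely the identity $K(\beta S_j)=\operatorname{cl}S_j\cap K(\beta\N)$ for the closed subsemigroup and $\widetilde{\pi_j}[K(\beta I)]\subseteq K(\beta S_j)$ for the surjective extension, is the only genuinely nontrivial ingredient, everything else being ultrafilter bookkeeping.
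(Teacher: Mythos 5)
Your argument is correct, and each black box you invoke is genuinely available in \cite{HiSt}: the characterization of piecewise syndetic sets as those whose closure meets $K(\beta S)$, the identity $K(T)=T\cap K(S)$ for a compact subsemigroup $T$ of a compact right topological semigroup $S$ with $T\cap K(S)\neq\emptyset$, the fact that the continuous extension of a surjective homomorphism of discrete semigroups is a surjective continuous homomorphism, and the fact that such a map carries the smallest ideal onto the smallest ideal. One point of comparison to keep in mind: the paper itself offers no proof of this proposition at all --- it is imported verbatim as Corollary 2.7 of \cite{BeHi} --- so the right benchmark is Bergelson--Hindman's original argument, and yours is essentially a reconstruction of it: working coordinatewise, using that each projection $S_j=\pi_j(I)$ is a syndetic subsemigroup of $\N$, and pushing $K(\beta I)$ through the extended projections. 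Two small remarks. First, your numerical-semigroup step is exactly where the convention $\N=\{1,2,\dots\}$ is silently used: a nonempty subsemigroup of the positive integers contains all sufficiently large multiples of its gcd, hence is syndetic, whereas if $0$ were allowed then $\pi_j(I)$ could be $\{0\}$ and the statement would fail (e.g. $I=\{0\}\times\N$ with $0\notin B$); this is the convention in force in \cite{BeHi} and \cite{HiSt}, so you are fine, but it is worth saying. Second, your middle paragraph can be compressed: once you have $\widetilde{\pi_j}(p)\in K(\beta S_j)=\operatorname{cl}S_j\cap K(\beta\N)\subseteq K(\beta\N)$, the hypothesis that $B$ is $\PS^*$ in $\N$ gives $B\in\widetilde{\pi_j}(p)$ directly, with no need to pass through the intermediate claim that $B\cap S_j$ is $\PS^*$ in $S_j$. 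Your closing observation --- that $I$ itself need not be piecewise syndetic in $\N^l$, so one cannot simply compare $K(\beta I)$ with $K(\beta\N^l)$ --- correctly identifies the crux and is precisely why the projection device appears in \cite{BeHi}; note it is also why the $\PS^*$ case is genuinely harder than the $\IP^*$ and $\Delta^*$ cases of Proposition \ref{corol2.3BeHi}, where idempotents of $\operatorname{cl}I$ are already idempotents of $\beta(\N^l)$ and no smallest-ideal bookkeeping is needed.
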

We are now finally able to prove Theorem \ref{tupleofpower.filter.bilateral}.

\textbf{Proof of Theorem \ref{tupleofpower.filter.bilateral}.}

 If $T\oplus \dots \oplus T^r$ is $\F$-operator on $X^r$ for some $r\in \N$, obviously $T$ is $\F$-operator. Conversely, let $T$ an $\F$-operator,  $r\in \N$ and $U, V$ non-empty open sets, we need to show that $N(U, V)\in t\F$, for any $t=1,\dots, r$.
   
   Denote,
\begin{equation}
\label{Bergelson.cond} 
A=\{m, 2m, \dots, rm: m\in \N\}\cap (\underbrace{N(U, V)\times \dots  \times N(U, V)}_{r-times}).
\end{equation}
   By Proposition \ref{corol2.3BeHi}, we have that if $N(U, V)$ is $IP^*$-set ($\Delta^*$-set) in $\N$, then $A$ is $IP^*$-set ($\Delta^*$-set) in $\{m, 2m, \dots, rm: m\in \N\}$. Analogously, by Proposition \ref{corol2.7BeHi}, we have that if $N(U, V)$ is $PS^*$-set in $\N$, then $A$ is $PS^*$-set  in $\{m, 2m, \dots, rm: m\in \N\}$.

 Denote $\prod_i$ the projection onto the $i$-th coordinate. It is not difficult to see that $\prod_1(A)\in \F$, for $\F=\Delta^*, \IP^*, \PS^*$.  
 Then, (\ref{Bergelson.cond}) is equivalent to say 
 \[
B=\{m\in \N: tm\in N(U, V)\}\in \F,
 \]
for any $t=1,\dots, r$.

Hence, $tB\subseteq N(U, V)$ and $B\in \F$. Then $N(U, V)\in t\F$ for any $t=1,\dots, r$. Since $\F=\Delta^*, \IP^*, \PS^*$; it is a filter, then it is not difficult to see that $T\oplus \dots \oplus T^r$ is indeed an $\F$-operator on $X^r$. 

If $B_w$ is a weighted shift on $c_0$ or $l^p$ and $\F=\Delta^*, \IP^*, \PS^*$; by Proposition \ref{cor.charact.bilateral} (Proposition \ref{cor.charact.unilateral}), we have $B_w$ is an $\F$-operator if and only if $(B_w, \dots, B_w^r)$ is d-$\F$ for any $r\in \N$. 

Finally, let $\F$ be the family of syndetic sets. Just recall that $T$ is syndetic operator if and only if $T$ is $\PS^*$-operator \cite{BMPP}. Hence $T$ is syndetic operator  if and only if $T\oplus\dots\oplus T^r$ is $\PS^*$-operator on $X^r$ for any $r\in \N$. If $B_w$ is a weighted shift then $B_w$ is syndetic operator if and only if $(B_w, \dots, B_w^r)$ is $d$-$\PS^*$ for any $r\in \N$. This concludes the proof of Theorem \ref{tupleofpower.filter.bilateral}. \ \ 
    

\end{document}